\newtheorem{thm}{Theorem}
\newtheorem{cor}{Corollary}
\newtheorem{lemma}[thm]{Lemma}
\newtheorem{prop}{Proposition}
\DeclareMathOperator{\ord}{ord}
\DeclareMathOperator{\F}{\mathbb{F}}
\begin{document}
	\baselineskip=16.3pt
	\parskip=14pt
	\begin{center}
		\section*{Slope Estimates for Generalized Artin-Schreier Curves}
		
		{\large 
			Gary McGuire\footnote{Corresponding Author, email gary.mcguire@ucd.ie, Research supported by Science Foundation Ireland Grant 13/IA/1914} and
			Emrah Sercan Y{\i}lmaz \footnote {Research supported by Science Foundation Ireland Grant 13/IA/1914} 
			\\
			School of Mathematics and Statistics\\
			University College Dublin\\
			Ireland}
	\end{center}
	\subsection*{Abstract}
	We provide first slope estimates 
	 of the Newton polygon of generalized Artin-Schreier curves, which are proved using
	the action of Frobenius and Verschiebung on cohomology.
	We provide a number of applications such as an improved Hasse-Weil bound
	for this class of curves.
	
	\subsection*{Keywords} Generalized Artin-Schreier, Supersingular, Newton Polygon.

	\section{Introduction}
	Let $q=p^u$ where $p$ is any prime and $u\ge 1$ is an integer, and
	let $Q=p^s$ where $s\ge 1$ is an integer.
	Let $X$ be a projective smooth absolutely irreducible curve of genus $g$ defined over $\mathbb{F}_Q$.
		Consider the $L$-polynomial of the curve $X$ over $\mathbb F_{Q}$, defined by
	$$L_X(T)=exp\left( \sum_{i=1}^\infty ( \#X(\mathbb F_{Q^i}) - Q^i - 1 )\frac{T^i}{i}  \right).$$
	where $\#X(\mathbb F_{Q^i})$ denotes the number of $\mathbb F_{Q^i}$-rational points of $X$. 
It is well known that $L_X(T)$ is a polynomial of degree $2g$ with integer coefficients, so we write it as 
\begin{equation} \label{L-poly}
L_X(T)= \sum_{i=0}^{2g} c_i T^i, \ c_i \in \mathbb Z.
\end{equation}

The Hasse-Weil bound places restrictions on the coefficients of $L_X(T)$ and on the values
of $\#X(\mathbb F_{Q^i})$. When we restrict ourselves to certain types of curve, such as supersingular curves,
or curves with Hasse-Witt invariant 0, even more restrictions are placed on 
$L_X(T)$ and $\#X(\mathbb F_{Q^i})$. This article is about generalized Artin-Schreier curves
and these restrictions.
Our main result is a bound on the first slope, see Theorem 1 below.

Consider the sequence of points 
$$\left\{ \left(i,\frac{\ord_p(c_i)}{s}\right): \ 0\leq i \leq 2g,  \right\}.$$
 in $\mathbb Q^2$. If $c_i=0$ for some $1\le i \le 2g$, we define $\ord_p(c_i)=\infty$.
The normalized $p$-adic Newton polygon of $L_X(T)$ is defined to be lower convex hull of this set of points. It is usually called the Newton polygon of $X/\mathbb F_Q$, and denoted by $NP(X/\mathbb F_Q)$. 
It is well known that $c_0=1$ and $c_{2g}=q^g$, so $(0,0)$ and $(2g,g)$ are
respectively the initial and the terminal points of the Newton
polygon.

	We call a curve $X$ a \emph{generalized Artin-Schreier} curve over $\mathbb F_Q$ if $X$
	 can be defined by an equation of the form
	\begin{equation} \label{curve-form}
	X\: : \:y^{q}-y=a_dx^d+a_{d-1}x^{d-1}+\cdots+a_1x+a_0
	\end{equation}
	where $a_i \in \mathbb{F}_Q$, $a_d \neq 0$, $(d,p)=1$, $q=p^u$ and $u$ is a positive integer.
We note that there is not necessarily any relationship between $u$ and $s$, where $Q=p^s$.
The genus of \eqref{curve-form} is $(q-1)(d-1)/2$.

In the case $q=p$ (i.e.\ $u=1$) the curve \eqref{curve-form}  is known as an Artin-Schreier curve.
In terms of function fields, an Artin-Schreier curve is a $p$-cyclic covering 
of the projective line over $\mathbb F_Q$ ramified only at infinity.
While there are many papers in the literature about Artin-Schreier curves, 
there are fewer about generalized Artin-Schreier curves, especially  in our context.
	
	Let $X/\mathbb F_Q$ be a generalized Artin-Schreier curve given by \eqref{curve-form}. 
	Define the support of $X$ by
	$$supp(X) \: :=\: \{ i \in \mathbb N \:|\: a_i \neq 0\}.$$ 
	Let $s_p(i)$ be the sum of all digits in the base $p$€ expansion of $i \in \mathbb{N}$.

	Let $NP_1(X/\mathbb F_Q)$ denote the first slope of $NP(X/\mathbb F_Q)$,
This is usually referred to as the first slope of $X/\mathbb F_Q$.
The first  and main result in this paper is the following.

\begin{thm}\label{weight-slope-thm}
	Let $X:y^{q}-y=f(x)$ where $f(x) \in \mathbb F_Q[x]$ has degree $d$, and let $\sigma=\max\{s_p(l) \: | \: l\in\text{supp}(X) \}$. Then $$NP_1(X/\mathbb F_{Q}) \ge \frac{1}{\sigma}.$$ 
\end{thm}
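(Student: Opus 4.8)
The plan is to make the normalized Newton polygon intrinsic and then bound the slopes of the absolute Frobenius directly. Dividing the $p$-adic valuations by $s$ removes all dependence on the ground field, so the slopes of $NP(X/\mathbb{F}_Q)$ coincide with the slopes of the $F$-isocrystal $H^1_{\mathrm{crys}}(X)$ attached to the $p$-power Frobenius $F$; in particular they are unchanged under extension of $\mathbb{F}_Q$. I would therefore first enlarge the ground field so that $\mathbb{F}_q \subseteq \mathbb{F}_Q$ (that is, $u \mid s$), which costs nothing and makes $X \to \mathbb{P}^1$ a Galois cover with group $(\mathbb{F}_q,+)$ defined over $\mathbb{F}_Q$. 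Writing $F$ for the $p$-semilinear Frobenius and $V$ for Verschiebung on the Dieudonn\'e module $M = H^1_{\mathrm{crys}}(X)$, so that $FV=VF=p$, the assertion $NP_1 \geq 1/\sigma$ becomes the statement that every slope of $M$ is at least $1/\sigma$.

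The key reduction is the following criterion: if $F^{\sigma}(M) \subseteq pM$, then every slope of $M$ is $\geq 1/\sigma$. Indeed, $p^{-1}F^{\sigma}$ is then an integral semilinear endomorphism of $M$, so all of its slopes are $\geq 0$; since a slope $\mu$ of $F$ contributes the slope $\sigma\mu - 1$ to $p^{-1}F^{\sigma}$, integrality forces $\mu \geq 1/\sigma$ for every slope $\mu$ of $M$. Thus the whole problem collapses to the single divisibility
$$F^{\sigma}(M) \subseteq pM.$$
The dual formulation, that $V$ is sufficiently nilpotent on $M/pM$ so that the largest slope is $\leq 1-1/\sigma$, is equivalent by the symmetry of the Newton polygon and could be used interchangeably.

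To establish this divisibility I would use the Artin--Schreier decomposition $M = \bigoplus_{\psi \neq \psi_0} M_\psi$ indexed by the nontrivial additive characters $\psi$ of $\mathbb{F}_q$, together with Dwork's explicit description of Frobenius on each $M_\psi$. In a monomial basis $\{x^m\}$ the Frobenius acts, up to a unit, by $x^m \mapsto \sum_n \gamma_{m,n}\, x^n$, where the coefficients $\gamma_{m,n}$ are built from the Dwork splitting function and $\ord_p(\gamma_{m,n})$ grows in proportion to the base-$p$ digit data separating $n$ from $m$ under multiplication by $p$ against the exponents in $\supp(X)$. Iterating $F$ cycles the exponent through its base-$p$ rotations, and each nonzero digit that must be cleared adds to the accumulated valuation; after at most $\sigma$ applications every path has gained valuation at least $1$, which yields $F^{\sigma}(M_\psi) \subseteq pM_\psi$ on each summand and hence $F^{\sigma}(M)\subseteq pM$.

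The main obstacle is exactly this last digit-sum estimate: controlling the $p$-adic valuations of the Frobenius matrix entries uniformly and verifying that $\sigma$ iterations always suffice to gain a full power of $p$. This is a Stickelberger-type phenomenon---for a monomial $f=x^l$ the reciprocal roots are Gauss sums and their valuations are literally proportional to $s_p(l)$---but for a general $f$ one must track the combinatorics of carries along all exponents of $\supp(X)$ simultaneously rather than a single monomial. I expect the careful bookkeeping of these carries, and the verification that the worst case is governed by the single quantity $\sigma=\max\{s_p(l):l\in\supp(X)\}$, to be the technical heart of the argument.
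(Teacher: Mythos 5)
Your reduction of the theorem to the single integral divisibility $F^{\sigma}(M)\subseteq pM$ is where the argument breaks. That divisibility is indeed sufficient for $NP_1\ge 1/\sigma$, but it is strictly stronger than the slope statement: the Newton polygon determines the $F$-crystal only up to isogeny, and an $F$-crystal with all slopes $\ge 1/\sigma$ need not satisfy $F^{\sigma}M\subseteq pM$ integrally. Concretely, for $\sigma=2$ your condition says $F^{2}=0$ on $M/pM$, which for the Dieudonn\'e module of a curve is equivalent to the curve being superspecial ($a$-number equal to $g$); by Ekedahl's bound superspecial curves have genus at most $p(p-1)/2$, whereas the supersingular curves of Theorem \ref{main} (for which $\sigma=2$ and the theorem must apply) have unbounded genus. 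So the statement you propose to prove is false for curves inside this very family. The correct integral criterion --- the one the paper uses, quoted from Scholten--Zhu as Theorem \ref{lambda} --- is that $p^{\lceil n\lambda\rceil}\mid V^{n+g-1}L$ for \emph{all} $n\ge 1$, and the shift by $g-1$ is precisely the isogeny defect that your one-shot criterion ignores. Any repair must replace $F^{\sigma}M\subseteq pM$ by an asymptotic divisibility over all iterates, at which point you are forced into an induction of the paper's type (Theorem \ref{keyT}).

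Separately, the part of your plan that would carry the actual content --- that the $p$-adic valuations of the Dwork--Frobenius matrix entries accumulate a full power of $p$ after $\sigma$ iterations, uniformly over all carry patterns among the exponents of $\supp(X)$ --- is asserted, not proved, and you yourself flag it as the technical heart. This is exactly where the paper does all its work: it expands $V^{n+g-1}\omega_{ij}$ in the explicit basis of Lemma \ref{basis}, reduces everything to the coefficients $C_r(i,N)$, proves $\ord_p(C_r(i,N))\ge\bigl(\tilde{s}_p(r,\supp(X))-s_p(i-1)-1\bigr)/(p-1)$ via Lemmas \ref{dk} and \ref{ek}, and then establishes the combinatorial inequality $\sigma\,\tilde{s}_p(r,\supp(X))\ge s_p(r)$ using the $r$-tiling sequences of Section \ref{rtiling}. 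Your Stickelberger/Gauss-sum heuristic is the right intuition for a monomial $f$, but the multi-exponent digit bookkeeping \emph{is} the theorem, and the proposal does not supply it.
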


This theorem follows a line of results that one may call $p$-adic  bounds,
where the proofs rely on Stickelberger's theorem,
see for example Moreno and Moreno \cite{mm} and later  Blache \cite{RB}.
Our methods are completely different, and are similar to those of Scholten-Zhu in the papers \cite{SE}, \cite{FS} and \cite{HE}.

This paper is laid out as follows.
Section 2 gives important applications of Theorem  \ref{weight-slope-thm}.
In Section \ref{sse} we give some background 
 for the proof of Theorem  \ref{weight-slope-thm}.
Sections 4 and 5 develop this for generalized Artin-Schreier curves, and 
mostly follow the development of Scholten-Zhu
(no new ideas are needed in generalising from $p$ to $q$, however we include 
the proof for completeness).
In Section \ref{rtiling} we present new results for characteristic $p$ on $r$-tiling sequences.
Finally, Section \ref{pfthm1} presents the proof of Theorem \ref{weight-slope-thm},
and Section \ref{pfthm3} presents the proof of Theorem \ref{main-2}.

\section{Applications}

In this section we will give some applications of Theorem \ref{weight-slope-thm} 
where $X$ is a generalized Artin-Schreier curve of the form 
\eqref{curve-form}.

\subsection{First Slope}

Theorem \ref{weight-slope-thm} allows us to give a lower bound for the first slope of Newton polygon of the generalized Artin-Schreier curves \eqref{curve-form}
depending on $d$ and $p$, but not on $u$. The following corollary states this bound.

\begin{cor}\label{degree-slope-cor}
	Let $X:y^{p^u}-y=f(x)$ where $f(x) \in \mathbb F_Q[x]$ has degree $d$, and let \\
	$\tau=(p-1) \lceil \log_p(d) \rceil$. Then 
	\begin{equation}\label{tightbound}
	NP_1(X/\mathbb F_{Q}) \ge \frac{1}{\tau}. \end{equation}
\end{cor}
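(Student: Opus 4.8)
The plan is to derive Corollary \ref{degree-slope-cor} as a direct consequence of Theorem \ref{weight-slope-thm}. Since Theorem \ref{weight-slope-thm} already establishes $NP_1(X/\mathbb F_Q) \ge 1/\sigma$ where $\sigma = \max\{s_p(l) : l \in \text{supp}(X)\}$, and since the bound $1/\tau$ in the corollary is meant to depend only on $d$ and $p$ (not on the specific coefficients of $f$), it suffices to show that $\sigma \le \tau = (p-1)\lceil \log_p(d)\rceil$. Because $x \mapsto 1/x$ is decreasing, the inequality $\sigma \le \tau$ immediately yields $1/\sigma \ge 1/\tau$, and the corollary follows.

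Thus the entire task reduces to a purely combinatorial estimate on digit sums. First I would observe that every $l \in \text{supp}(X)$ satisfies $1 \le l \le d$, and that $s_p(l)$ is the sum of the base-$p$ digits of $l$. The key step is to bound $s_p(l)$ uniformly over all such $l$. Any $l \le d$ has at most $\lceil \log_p(d) \rceil$ base-$p$ digits: indeed, if $l$ had $k$ digits then $l \ge p^{k-1}$, so $k \le \log_p(l) + 1 \le \log_p(d) + 1$, and one checks that the number of digits is at most $\lceil \log_p d \rceil$ (with a small amount of care needed when $d$ is itself a power of $p$). Since each base-$p$ digit is at most $p-1$, we get $s_p(l) \le (p-1)\lceil \log_p(d)\rceil = \tau$ for every $l \in \text{supp}(X)$, and taking the maximum gives $\sigma \le \tau$.

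I would expect the only real subtlety to be getting the counting of base-$p$ digits exactly right at the boundary cases, specifically confirming that an integer bounded by $d$ never needs more than $\lceil \log_p d\rceil$ digits. The inequality $l \le d$ gives $l \le p^{\lceil \log_p d\rceil}$, so the number of digits of $l$ is at most $\lceil \log_p d\rceil + 1$; one must verify that the worst case (all digits equal to $p-1$) still respects the stated bound, which it does because an integer with $\lceil \log_p d\rceil + 1$ digits is at least $p^{\lceil \log_p d\rceil} \ge d$, forcing equality configurations that can be handled directly. This is the main obstacle, though it is entirely elementary and involves no new machinery beyond Theorem \ref{weight-slope-thm} itself.
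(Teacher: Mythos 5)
Your proposal is correct and follows exactly the paper's route: the paper's proof simply notes that $\max\{s_p(l) : l\in\supp(X)\}\le (p-1)\lceil\log_p d\rceil$ "trivially" and invokes Theorem \ref{weight-slope-thm}, which is precisely your reduction $\sigma\le\tau$. You merely spell out the elementary digit-counting that the paper leaves implicit, and your handling of the boundary case (an $l\le d$ with $\lceil\log_p d\rceil+1$ digits would force $l\ge p^{\lceil\log_p d\rceil}\ge d$) is sound.
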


\begin{proof}
	We trivially have $\max\{s_p(l) \: | \: l\in\text{supp}(X) \} \le \lceil\log_pd\rceil(p-1)$. The statement now follows by Theorem \ref{weight-slope-thm}.
\end{proof}

This improves exponentially (compare $d/2$ with $\log (d+1)$)
on the bound in \cite{CX} where it is shown that
$NP_1(X/\mathbb F_{Q}) \ge \frac{1}{g}$
for the curve $y^2-y=f(x)$ in characteristic 2, where $f(x)$ has degree $d=2g+1$,
As remarked in \cite{HE}, the $1/g$ bound follows from properties of 
Newton polygons of abelian varieties.

The bound \eqref{tightbound} is tight in the sense that for each $p$ and $d$ there is a 
curve $X$ with first slope equal to $1/\tau$:
our proofs in the remainder of the paper show that the curves in Theorem \ref{main-2} give equality in \eqref{tightbound}.

\subsection{Divisibility}

We next relate the first slope to divisibility.
The $p$-divisibility of the coefficients in the L-polynomial \eqref{L-poly} is of interest for a few reasons. 
For example, Manin showed that the $p$-rank of the Jacobian of a curve (also known
as the Hasse-Witt invariant) is equal to 
the degree of the L-polynomial with coefficients reduced modulo $p$.
Thus, a curve has $p$-rank 0 precisely when all coefficients except the constant term
are divisible by $p$.
Mazur \cite{mazur} has drawn attention to the important problem of finding the $p$-adic valuations of the Frobenius eigenvalues,
which is closely related to the $p$-divisibility of the coefficients.

Another reason for studying the $p$-divisibility is to
prove supersingularity. The following is immediate from the
definition of supersingularity.

\begin{lemma}  \label{Sti-Xing}
A curve $X$ over $\F_Q$ (where $Q=p^s$) with the
$L$-polynomial $\eqref{L-poly}$ is
supersingular  if and only if
$$
\frac{\ord_p(c_i)}{s}\geq \frac{i}{2} \ \ \mbox{for all $i=1,\ldots ,2g$.}
$$
\end{lemma}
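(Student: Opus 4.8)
The plan is to prove Lemma \ref{Sti-Xing}, which characterizes supersingularity in terms of the $p$-adic valuations of the $L$-polynomial coefficients.
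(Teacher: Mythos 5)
Your proposal contains no argument at all---it only restates the goal of proving the lemma, so as written there is nothing to check. The paper itself treats this as immediate from the definitions, and the missing content is the following short convexity argument, which you would need to supply. By definition (as recalled in the paper), $X$ is supersingular precisely when its Newton polygon $NP(X/\F_Q)$ is the straight line segment of slope $1/2$ joining $(0,0)$ to $(2g,g)$. The Newton polygon is the lower convex hull of the points $\bigl(i,\ord_p(c_i)/s\bigr)$ for $0\le i\le 2g$, and the endpoints $(0,0)$ and $(2g,g)$ always belong to it since $c_0=1$ and $c_{2g}=q^g$. Hence the lower convex hull coincides with the segment of slope $1/2$ if and only if no point of the configuration lies strictly below the line $y=x/2$, i.e.\ if and only if $\ord_p(c_i)/s\ge i/2$ for all $i=1,\dots,2g$ (with the convention $\ord_p(0)=\infty$ handling vanishing coefficients). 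Both directions are contained in this single equivalence: if some point dropped below the line, the lower convex hull would have a first slope strictly less than $1/2$, contradicting supersingularity; conversely, if all points lie on or above the line, the hull of a set containing $(0,0)$ and $(2g,g)$ and lying weakly above the segment between them is exactly that segment. Without spelling out at least this much, the lemma is asserted rather than proved.
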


We will present a (partial) generalization of this for generalized Artin-Schreier curves
in Corollary \ref{GASdiv}, but first we need a simple proposition.


\begin{prop}\label{divisiblity-Sn-prop}
	Let $X:y^{p^u}-y=f(x)$ be a generalized Artin-Schreier curve, where $f(x) \in \mathbb F_Q[x]$ has degree $d$.
	Suppose that  $NP_1(X/\mathbb F_Q)=1/\sigma$ where $\sigma \ge 2$. Then: 
	\begin{enumerate} 
	\item If $X$ has L-polynomial $\eqref{L-poly}$, then $p^{\left\lceil si/\sigma\right\rceil}$ divides $c_i$ for $1\le i \le 2g$.
	\item $ p^{\left\lceil sn/\sigma\right\rceil}$ divides $|\#X(\mathbb F_{Q^n})-(Q^n+1)|$ for all integers $n\ge 1$. 
	\end{enumerate}
\end{prop}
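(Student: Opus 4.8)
The plan is to read both statements directly off the geometry of the Newton polygon, with the integrality of the relevant quantities supplying the ceiling; the hypothesis $NP_1(X/\mathbb F_Q)=1/\sigma$ does essentially all the work.

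For part (1), I would argue by convexity. By definition $NP(X/\mathbb F_Q)$ is the lower convex hull of the points $\{(i,\ord_p(c_i)/s)\}$, so each such point lies on or above the polygon. The polygon starts at $(0,0)$ and is convex, so its smallest slope is its first slope $1/\sigma$; thus its slope function is everywhere $\ge 1/\sigma$ and its height at abscissa $i$ is at least $i/\sigma$. Hence $\ord_p(c_i)/s\ge i/\sigma$, that is $\ord_p(c_i)\ge si/\sigma$. Since $c_i\in\mathbb Z$, the quantity $\ord_p(c_i)$ is a nonnegative integer (or $+\infty$ when $c_i=0$), so it is in fact at least $\lceil si/\sigma\rceil$, giving $p^{\lceil si/\sigma\rceil}\mid c_i$.

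For part (2), I would pass to the Frobenius eigenvalues. Writing $L_X(T)=\prod_{j=1}^{2g}(1-\alpha_jT)$ and comparing $\log L_X(T)$ with the defining series yields $\#X(\mathbb F_{Q^n})-(Q^n+1)=-\sum_{j=1}^{2g}\alpha_j^{\,n}$. By standard Newton polygon theory the slopes of $NP(X/\mathbb F_Q)$ are exactly the normalized valuations $\ord_p(\alpha_j)/s$, so the hypothesis forces $\ord_p(\alpha_j)\ge s/\sigma$ for every $j$. Extending $\ord_p$ to $\overline{\mathbb Q_p}$ and applying the ultrametric inequality gives
$$\ord_p\Bigl(\sum_{j}\alpha_j^{\,n}\Bigr)\ge \min_j \ord_p(\alpha_j^{\,n})=n\min_j\ord_p(\alpha_j)\ge \frac{ns}{\sigma}.$$
As $\sum_j\alpha_j^{\,n}=(Q^n+1)-\#X(\mathbb F_{Q^n})$ is a rational integer, its valuation is an integer bounded below by $ns/\sigma$, hence by $\lceil ns/\sigma\rceil$, and the claim (unaffected by the absolute value) follows. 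A self-contained alternative that never leaves $\mathbb Z$ is to deduce (2) from (1) via Newton's identities: with $e_k=(-1)^kc_k$ one has, for each $n\ge1$, an integer relation expressing $p_n=\sum_j\alpha_j^{\,n}$ through the $e_k$ and lower power sums, and an induction using $\ord_p(e_k)=\ord_p(c_k)\ge ks/\sigma$ propagates $\ord_p(p_n)\ge ns/\sigma$; the only term needing a separate check is the exceptional $(-1)^{n-1}n\,e_n$, where $\ord_p(n\,e_n)\ge\ord_p(c_n)\ge ns/\sigma$ as required.

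I expect no serious obstacle. The mathematical content lies entirely in correctly invoking the convexity of the Newton polygon in part (1) and the slope–valuation dictionary for Frobenius eigenvalues in part (2). The single point demanding care is the passage from a real lower bound to its ceiling, which is legitimate precisely because the quantities $c_i$ and $\#X(\mathbb F_{Q^n})-(Q^n+1)$ are genuine integers; this is exactly where integrality is essential. The assumption $\sigma\ge 2$ is automatic for any curve of positive genus, since the first slope is at most the average slope $1/2$, and serves only to guarantee that the stated $p$-divisibility is nontrivial.
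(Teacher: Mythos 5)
Your proof is correct. Part (1) is exactly the paper's argument: convexity of the Newton polygon forces $\ord_p(c_i)/s\ge i/\sigma$, and integrality of $\ord_p(c_i)$ upgrades this to the ceiling. For part (2) your primary route differs from the paper's: you pass to the reciprocal roots $\alpha_j$ of $L_X$, invoke the slope--valuation dictionary to get $\ord_p(\alpha_j)\ge s/\sigma$, and conclude by the ultrametric inequality applied to $\sum_j\alpha_j^n$, using integrality of this sum to take the ceiling. The paper instead stays entirely inside $\mathbb Z$: it uses the logarithmic-derivative identity $L_X'(t)=L_X(t)\sum_{n\ge1}S_nt^{n-1}$ and inducts on $n$, which is precisely the Newton's-identities argument you sketch as your ``self-contained alternative'' (including the separate check of the term $n\,c_n$). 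Your eigenvalue argument buys a shorter, induction-free proof and makes transparent that only the \emph{first} slope matters; the paper's recursion buys elementarity, never leaving the integers or invoking the factorization of $L_X$. Both are valid, and your closing remark that $\sigma\ge2$ is automatic (first slope $\le 1/2$) is also correct.
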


\begin{proof}
	Let $X$ have L-polynomial $\eqref{L-poly}$. Since $NP_1(X/\mathbb F_Q)=1/\sigma$, 
	it follows from convexity of the Newton polygon that $p^{\left\lceil si/\sigma\right\rceil}$ divides $c_i$ where $1\le n \le 2g$.
	
	Let $S_n=|\#X(\mathbb F_{Q^n})-(Q^n+1)|$. Since we have $S_1=c_1$ is divisible by $p^{\left\lceil s/\sigma\right\rceil}$, and since we  have the well-known relation 
	$$c_1+2c_2t+\cdots+2gc_{2g}t^{2g-1}=(c_0+c_1t+\cdots+c_{2g}t^{2g})\sum_{n=1}^\infty S_nt^{r-1},$$ we get the result by induction.	
\end{proof}

It follows from Proposition  \ref{divisiblity-Sn-prop} part 1
 and the result of Manin mentioned above
that a generalized Artin-Schreier curve $X$
defined by \eqref{curve-form} has $p$-rank 0.
This is well known and can be proved by other methods (such as using
the Deuring-Shafarevich formula).

Here is the generalization of Lemma \ref{Sti-Xing}.

\begin{cor}\label{GASdiv}
A generalized Artin-Schreier curve $X$
defined by \eqref{curve-form}  over $\F_Q$ (where $Q=p^s$) with the
$L$-polynomial $\eqref{L-poly}$ 
and $NP_1(X/\mathbb F_Q)=1/\sigma$ has
$$
\frac{\ord_p(c_i)}{s}\geq \frac{i}{\sigma} \ \mbox{for all $i=1,\ldots ,2g$.}
$$
\end{cor}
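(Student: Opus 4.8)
The plan is to read the conclusion as the assertion that the Newton polygon lies weakly above the line through the origin of slope $1/\sigma$, and to obtain this directly from convexity. Recall that $NP(X/\mathbb F_Q)$ is by definition the lower convex hull of the points $(i,\ord_p(c_i)/s)$ for $0\le i\le 2g$, and that $c_0=1$ forces the initial vertex to be $(0,0)$. In particular every point $(i,\ord_p(c_i)/s)$ lies on or above this polygon (the case $c_i=0$, where $\ord_p(c_i)=\infty$, being trivial), so it suffices to bound the height of the polygon itself at each integer abscissa $i$.

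The key observation is that the hypothesis $NP_1(X/\mathbb F_Q)=1/\sigma$ fixes the slope of the leftmost segment, and convexity forces every subsequent slope to be at least $1/\sigma$. Hence the line $y=x/\sigma$ is a supporting line of the polygon at the origin, and the whole polygon lies weakly above it. Writing $h(i)$ for the height of the polygon at abscissa $i$, this gives $h(i)\ge i/\sigma$, and combining with the previous paragraph yields $\ord_p(c_i)/s\ge h(i)\ge i/\sigma$, which is exactly the claim.

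Alternatively, one can short-circuit the argument by quoting Proposition \ref{divisiblity-Sn-prop} part 1: since the functional equation of the $L$-polynomial makes the slopes symmetric about $1/2$, the first slope satisfies $1/\sigma\le 1/2$, so $\sigma\ge 2$ and the hypothesis of that proposition is met. Then $p^{\lceil si/\sigma\rceil}$ divides $c_i$, so $\ord_p(c_i)\ge \lceil si/\sigma\rceil\ge si/\sigma$, and dividing by $s$ gives the result. I expect no genuine obstacle here: once the first slope is pinned down, the statement is a formal consequence of the convexity of the Newton polygon, and the only point meriting a word of care is the (automatic) check that $\sigma\ge 2$, so that either formulation applies cleanly. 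This also makes transparent that taking $\sigma=2$ recovers the supersingular criterion of Lemma \ref{Sti-Xing}.
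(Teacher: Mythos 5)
Your proposal is correct and matches the paper's argument: the paper proves this corollary simply by citing Proposition \ref{divisiblity-Sn-prop} part 1, whose own proof is exactly the convexity-of-the-Newton-polygon observation you spell out. Your remark that $\sigma\ge 2$ is automatic (since the first slope is at most $1/2$) is a reasonable, if implicit, point of care that the paper glosses over.
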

\begin{proof}
This follows from  Proposition  \ref{divisiblity-Sn-prop} part 1.
\end{proof}

Next we state a simple corollary about the divisibility of
the trace of Frobenius.

\begin{cor}\label{degree-divisibility-cor}
	Let $X:y^{p^u}-y=f(x)$ where $f(x) \in \mathbb F_Q[x]$ has degree $d$ and let 
	$\tau=(p-1)\lceil\log_pd\rceil$. Then $ p^{\left\lceil sn/\tau\right\rceil}$ divides $|\#X(\mathbb F_{Q^n})-(Q^n+1)|$ for all $n\geq 1$. 
\end{cor}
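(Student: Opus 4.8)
The plan is to obtain this statement by combining the slope bound of Corollary~\ref{degree-slope-cor} with the divisibility transfer already carried out in Proposition~\ref{divisiblity-Sn-prop}. Corollary~\ref{degree-slope-cor} gives $NP_1(X/\mathbb F_Q)\ge 1/\tau$, whereas Proposition~\ref{divisiblity-Sn-prop} is phrased under the hypothesis of an exact first slope $NP_1=1/\sigma$. The one point requiring attention is therefore that the proof of Proposition~\ref{divisiblity-Sn-prop} never really uses equality: it only uses the fact that every point of the Newton polygon lies on or above a line of slope $NP_1$ through the origin. I would make this observation explicit and then reuse the argument with $1/\tau$ in place of $1/\sigma$.

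First I would establish coefficient divisibility. Writing the $L$-polynomial as in \eqref{L-poly}, convexity of $NP(X/\mathbb F_Q)$ together with $NP_1(X/\mathbb F_Q)\ge 1/\tau$ forces each point $(i,\ord_p(c_i)/s)$ to lie on or above the line $y=x/\tau$ emanating from $(0,0)$. Hence $\ord_p(c_i)\ge si/\tau$, and since $\ord_p(c_i)$ is a nonnegative integer this improves to $\ord_p(c_i)\ge\lceil si/\tau\rceil$, that is, $p^{\lceil si/\tau\rceil}\mid c_i$ for $1\le i\le 2g$. Here $\tau=(p-1)\lceil\log_p d\rceil$ is a nonnegative integer, and for any curve of positive genus (so $d\ge 2$) one checks that $\tau\ge 2$, matching the mild hypothesis of Proposition~\ref{divisiblity-Sn-prop}.

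Next I would transfer this to the point counts. Put $a_n=\#X(\mathbb F_{Q^n})-(Q^n+1)$, so that $S_n=|a_n|$ and it suffices to show $p^{\lceil sn/\tau\rceil}\mid a_n$. Differentiating the defining relation $\log L_X(T)=\sum_{n\ge 1}a_nT^n/n$ gives the identity already used in Proposition~\ref{divisiblity-Sn-prop},
\[
\sum_{i=1}^{2g} i\,c_i\,T^{i-1}=\Bigl(\sum_{i=0}^{2g}c_iT^i\Bigr)\Bigl(\sum_{n\ge 1}a_nT^{n-1}\Bigr),
\]
whence, comparing coefficients of $T^{n-1}$ and using $c_0=1$,
\[
a_n = n\,c_n-\sum_{j=1}^{n-1}c_j\,a_{n-j}\ \ (1\le n\le 2g),\qquad
a_n = -\sum_{j=1}^{2g}c_j\,a_{n-j}\ \ (n>2g).
\]
I would then induct on $n$: the base case $a_1=c_1$ is divisible by $p^{\lceil s/\tau\rceil}$, and in the inductive step each summand $c_j a_{n-j}$ is divisible by $p^{\lceil sj/\tau\rceil+\lceil s(n-j)/\tau\rceil}$, while the isolated term $n\,c_n$ is divisible by $p^{\lceil sn/\tau\rceil}$. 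The superadditivity $\lceil x\rceil+\lceil y\rceil\ge\lceil x+y\rceil$ gives $\lceil sj/\tau\rceil+\lceil s(n-j)/\tau\rceil\ge\lceil sn/\tau\rceil$, so every term on the right is divisible by $p^{\lceil sn/\tau\rceil}$, and hence so is $a_n$.

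The argument is essentially bookkeeping once the two earlier results are in hand, so I do not expect a deep obstacle; the only things to be careful about are (i) verifying that the equality hypothesis in Proposition~\ref{divisiblity-Sn-prop} can be weakened to the inequality $NP_1\ge 1/\tau$ actually available here, and (ii) handling the tail $n>2g$, where the left-hand side of the power-series identity vanishes and the recurrence for $a_n$ becomes homogeneous in the previously bounded terms—this is exactly where the superadditivity of the ceiling again does the work.
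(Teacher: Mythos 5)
Your proposal is correct and takes essentially the same route as the paper, whose proof is the one-line observation that the statement follows from Corollary~\ref{degree-slope-cor} combined with Proposition~\ref{divisiblity-Sn-prop}. Your explicit remark that the equality hypothesis $NP_1(X/\mathbb F_Q)=1/\sigma$ in Proposition~\ref{divisiblity-Sn-prop} can be relaxed to the inequality $NP_1(X/\mathbb F_Q)\ge 1/\tau$ furnished by Corollary~\ref{degree-slope-cor} is precisely the point the paper leaves implicit, and your induction via the logarithmic-derivative identity is the same argument used in the proof of that proposition.
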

\begin{proof}
	It is an easy consequence of Corollary \ref{degree-slope-cor} and Proposition \ref{divisiblity-Sn-prop}.
\end{proof}

\subsection{Improved Hasse-Weil Bound}

An improved Hasse-Weil bound is presented in \cite{CX} in characteristic 2,
for $Q=2^n$, $n$ odd.
We present a stronger improvement here, for any prime power $Q$,
which is strictly better for genus $>3$.

\begin{cor}\label{degree-bound-slope}
	Let $X:y^{p^u}-y=f(x)$ where $f(x) \in \mathbb F_Q[x]$ has degree $d$.
	Let $\tau=\lceil\log_pd\rceil(p-1)$. Then 
	$$ |\#X(\mathbb F_{Q^n})-(Q^n+1)|\le p^{\left\lceil sn/\tau \right\rceil}\left\lfloor \dfrac{g\lfloor 2\sqrt {Q^n} \rfloor}{p^{\left\lceil sn/\tau\right\rceil}}\right\rfloor$$ 
	where $g$ is the genus of $X$.
\end{cor}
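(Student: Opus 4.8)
The plan is to combine two ingredients: Serre's explicit refinement of the Hasse-Weil bound and the $p$-divisibility furnished by Corollary \ref{degree-divisibility-cor}. Writing $N=|\#X(\mathbb F_{Q^n})-(Q^n+1)|$ and $t=\lfloor 2\sqrt{Q^n}\rfloor$, the plain Weil estimate $N\le 2g\sqrt{Q^n}$ is not sharp enough for the stated inequality, so instead I would use Serre's bound $N\le g\,t$. This follows by letting $\alpha_1,\overline{\alpha_1},\dots,\alpha_g,\overline{\alpha_g}$ denote the Frobenius eigenvalues of $X/\mathbb F_{Q^n}$ and applying the arithmetic--geometric mean inequality to the positive real algebraic integers $t+1\pm(\alpha_j+\overline{\alpha_j})$: since each of the two products $\prod_j\bigl(t+1\pm(\alpha_j+\overline{\alpha_j})\bigr)$ is a positive rational integer, hence at least $1$, one obtains $|\sum_j(\alpha_j+\overline{\alpha_j})|\le g\,t$, which is exactly $N\le g\,t$.

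Next, Corollary \ref{degree-divisibility-cor} tells us that $m:=p^{\lceil sn/\tau\rceil}$ divides $N$. Hence $N$ is a nonnegative integer that is simultaneously a multiple of $m$ and bounded above by $B:=g\,t=g\lfloor 2\sqrt{Q^n}\rfloor$.

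The conclusion is then the elementary remark that the largest multiple of $m$ not exceeding $B$ is $m\lfloor B/m\rfloor$, so that $N\le m\lfloor B/m\rfloor$, which is precisely the asserted bound $|\#X(\mathbb F_{Q^n})-(Q^n+1)|\le p^{\lceil sn/\tau\rceil}\lfloor g\lfloor 2\sqrt{Q^n}\rfloor/p^{\lceil sn/\tau\rceil}\rfloor$. I do not expect any genuine difficulty here; the only subtlety is that one must invoke the floor-refined Serre bound rather than the plain Weil bound, since it is the presence of the inner floor $\lfloor 2\sqrt{Q^n}\rfloor$ that makes the statement consistent and accounts for the strengthening for genus greater than $3$ mentioned in the text.
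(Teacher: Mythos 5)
Your proposal is correct and follows essentially the same route as the paper: combine the divisibility $p^{\lceil sn/\tau\rceil}\mid N$ from Corollary \ref{degree-divisibility-cor} with the bound $N\le g\lfloor 2\sqrt{Q^n}\rfloor$ and observe that the largest such multiple is $p^{\lceil sn/\tau\rceil}\lfloor g\lfloor 2\sqrt{Q^n}\rfloor/p^{\lceil sn/\tau\rceil}\rfloor$. Your one genuine improvement over the paper's terse proof is that you correctly identify that the inner floor forces one to invoke Serre's refinement rather than the plain Weil estimate $2g\sqrt{Q^n}$ (which the paper elides by calling it ``the usual Hasse-Weil bound''), and you supply the standard argument for it.
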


\begin{proof}
	This corollary is a consequence of Corollary \ref{degree-divisibility-cor}. 
	The right hand side is the smallest integer which is divisible by $p^{\left\lceil sn/\tau \right\rceil}$ and smaller than the usual Hasse-Weil bound.
\end{proof}

Note that Corollary \ref{degree-divisibility-cor} may be more useful than Corollary \ref{degree-bound-slope} when considering families of curves, because using the divisibility property we can greatly reduce the number of possible values of $|\#X(\mathbb F_{Q^n})-(Q^n+1)|$ as $X$ ranges over the family. This can be useful when studying cyclic codes, see \cite{M} for example.

We give a few numerical examples to illustrate our results.

\textbf{Example 1:} 
Let $Q=2$, $n=7$ and $d=15$, then Hasse-Weil bound and the bound in \cite{CX} give $|\#X(\mathbb F_{2^7})-(2^7+1)| \le 154$  and the bound in Corollary \ref{degree-bound-slope}  gives $|\#X(\mathbb F_{2^7})-(2^7+1)| \le 152$. 

The divisibility property in \cite{CX} implies that 2 divides $|\#X(\mathbb F_{2^{7}})-(2^{7}+1)| $,  however the divisibility property in Corollary \ref{degree-bound-slope}  tells us that $4$ divides $|\#X(\mathbb F_{2^{7}})-(2^{7}+1)|$.

\textbf{Example 2:} 
Let $Q=2$, $n=101$ and $d=83$, then Hasse-Weil bound gives $|\#X(\mathbb F_{2^{101}})-(2^{101}+1)| \le 130565559286778326$, the bound in \cite{CX} gives $|\#X(\mathbb F_{2^{101}})-(2^{101}+1)| \le 130565559286778320$ (an improvement of 6), and the bound in Corollary \ref{degree-bound-slope}  
gives $|\#X(\mathbb F_{2^{101}})-(2^{101}+1)| \le 130565559286759424$ (an improvement of 18,902).

The divisibility property in \cite{CX} gives $ 2^3 \;\mid\; |\#X(\mathbb F_{2^{101}})-(2^{101}+1)| $  and the divisibility property in Corollary \ref{degree-bound-slope}  gives $2^{15}\; \mid \;|\#X(\mathbb F_{2^{101}})-(2^{101}+1)|$. 

\textbf{Example 3:} 
Let $Q=3$, $n=51$ and $d=104$, then Hasse-Weil bound gives $|\#X(\mathbb F_{3^{51}})-(3^{51}+1)| \le  302314665567277 $ and the bound in Corollary \ref{degree-bound-slope}  gives $|\#X(\mathbb F_{3^{51}})-(3^{51}+1)| \le 302314665566691$. 
The divisibility property in Corollary \ref{degree-bound-slope}  gives $3^{11}\; \mid \;|\#X(\mathbb F_{3^{51}})-(3^{51}+1)|$. 

\subsection{Family of Supersingular Curves}

A curve is said to be \emph{supersingular} if its Newton polygon is a straight line segment of slope $1/2$
(equivalently if $NP_1(X/\mathbb F_{Q}) =1/2$).
In van der Geer-van der Vlugt  \cite{VV} and Scholten-Zhu \cite{FS} it is shown that  all curves of the form 
	$$
	y^2-y=\sum\limits_{i=0}^{k}a_{2^i+1}x^{2^i+1}
	$$
	are supersingular over the finite fields having characteristic $2$. 
	In van der Geer-van der Vlugt  \cite{VV}, Blache \cite{GSE} and Bouw et al \cite{WMN}, 
	it is shown that for any prime $p$ all curves of the form
	$$
	y^p-y=\sum\limits_{i=0}^{k}a_{p^i+1}x^{p^i+1}
	$$
	are supersingular over  finite fields having characteristic $p$. In this paper we will generalize these results and prove the following theorem:

\begin{thm}\label{main} All curves of the form
	$$
	y^{q}-y=\sum\limits_{i,j=0}^{k}a_{p^i+p^j}x^{p^i+p^j}.
	$$
	are supersingular, where $a_{p^i+p^j} \in \mathbb{F}_Q$.
\end{thm}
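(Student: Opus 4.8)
The plan is to read off supersingularity from Theorem~\ref{weight-slope-thm} together with the convexity of the Newton polygon. Recall that $X$ is supersingular exactly when $NP(X/\mathbb{F}_Q)$ is the single segment from $(0,0)$ to $(2g,g)$ of slope $1/2$, i.e.\ when $NP_1(X/\mathbb{F}_Q)=1/2$. Since the terminal point $(2g,g)$ is fixed, the Newton polygon always has average slope $1/2$; so it suffices to establish the one-sided bound $NP_1(X/\mathbb{F}_Q)\ge 1/2$ and then argue that convexity alone upgrades this to equality of every slope. The lower bound is exactly what Theorem~\ref{weight-slope-thm} delivers, once we control $\sigma=\max\{s_p(l)\mid l\in\supp(X)\}$.

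The key observation is that every exponent on the right-hand side has base-$p$ digit sum at most $2$. The exponents are the integers $p^i+p^j$ with $0\le i,j\le k$: when $i\ne j$ these are sums of two distinct powers of $p$, giving $s_p(p^i+p^j)=2$; when $i=j$ with $p$ odd, $p^i+p^i=2p^i$ is a single digit $2$, again $s_p=2$; and when $i=j$ with $p=2$, $2^i+2^i=2^{i+1}$ has $s_p=1$. In all cases $s_p(p^i+p^j)\le 2$, so $\sigma\le 2$ and Theorem~\ref{weight-slope-thm} gives
\[
NP_1(X/\mathbb{F}_Q)\ \ge\ \frac{1}{\sigma}\ \ge\ \frac12 .
\]
One should note a technical point before invoking the theorem: the diagonal exponents $2p^i$, and some others when $u>1$, may be divisible by $p$, so the given $f$ need not already present $X$ in the reduced shape of \eqref{curve-form}. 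This is dealt with by the standard substitution $y\mapsto y+cx^e$, which trades a term $c^qx^{qe}$ for $cx^e$; because dividing an exponent by $q=p^u$ only deletes trailing base-$p$ zeros, every digit sum is preserved and the bound $\sigma\le 2$ survives (cancellations can only shrink $\supp(X)$).

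It remains to promote $NP_1\ge 1/2$ to the statement that all slopes equal $1/2$. List the slopes of $NP(X/\mathbb{F}_Q)$ in increasing order as $\lambda_1\le\lambda_2\le\cdots$, with horizontal lengths $\ell_1,\ell_2,\dots$; then $\sum_m\ell_m=2g$ and $\sum_m\ell_m\lambda_m=g$, the latter because the polygon terminates at $(2g,g)$. Convexity gives $\lambda_m\ge\lambda_1=NP_1(X/\mathbb{F}_Q)\ge 1/2$ for every $m$, so $g=\sum_m\ell_m\lambda_m\ge\frac12\sum_m\ell_m=g$; equality throughout forces $\lambda_m=1/2$ for all $m$. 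Hence $NP(X/\mathbb{F}_Q)$ is the straight segment of slope $1/2$ and $X$ is supersingular. The genuinely substantive input is the first-slope estimate of Theorem~\ref{weight-slope-thm}; within the present argument the only steps needing care are the digit-sum computation in the borderline cases $i=j$ and $p=2$, and the check that reducing $X$ to a form covered by that theorem leaves $\sigma\le 2$ intact.
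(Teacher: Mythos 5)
Your proposal is correct and follows essentially the same route as the paper, which proves Theorem~\ref{main} in one line by observing $s_p(p^i+p^j)\le 2$ and invoking Theorem~\ref{weight-slope-thm} together with the definition of supersingularity as $NP_1(X/\mathbb{F}_Q)=1/2$. The extra details you supply (the convexity argument forcing all slopes to equal $1/2$, and the substitution $y\mapsto y+cx^e$ preserving digit sums when reducing to the form \eqref{curve-form}) are correct and merely make explicit what the paper leaves implicit.
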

Since $s_p(p^i+p^j)\le 2$ for all $i,j \ge 0$, Theorem \ref{main} follows from Theorem \ref{weight-slope-thm}.

\subsection{Family of Non-Supersingular Curves}

In the opposite direction, Sholten and Zhu showed in \cite{HE} that there is no hyperelliptic supersingular curve of genus $2^k-1$ in characteristic $2$ where $k \geq2$ (previously shown by Oort for genus 3). 
	Blache proved a similar result for all primes $p>2$ in \cite{RB} and showed that there is no supersingular Artin-Schreier  curve of genus $(p-1)(d-1)/2$ in characteristic $p$ where $n(p-1)>2$ and $d=i(p^n-1)$, $1 \leq i \leq p-1$.  We will generalize this result,
	and prove the following using the same techniques as we use to prove Theorem \ref{weight-slope-thm}.
	
	\begin{thm} \label{main-2}
		Let $d=i(p^n-1)$ with $n \geq 1$ and $1 \le i \le p-1$ and $n(p-1)>2$. Then 
		$$
		y^{p^u}-y=a_dx^d+a_{d-1}x^{d-1}+\cdots+a_1x+a_0
		$$
		is not supersingular for any $u\geq 1$.
	\end{thm}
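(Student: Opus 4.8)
The plan is to prove Theorem \ref{main-2} by showing that the first slope of the Newton polygon is strictly greater than $1/2$, which by the characterization of supersingularity (a straight line segment of slope $1/2$) forces non-supersingularity. Indeed, if we can exhibit that $NP_1(X/\mathbb F_Q) > 1/2$, then the Newton polygon cannot be the single segment of slope $1/2$, so $X$ is not supersingular. By Theorem \ref{weight-slope-thm} we have $NP_1(X/\mathbb F_Q) \ge 1/\sigma$ where $\sigma = \max\{s_p(l) : l \in \text{supp}(X)\}$, so a bound of the form $\sigma < 2$ would immediately give $NP_1 > 1/2$. However $\sigma < 2$ is generally false here, so the weight bound alone is not enough and a finer analysis is needed.

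The key computation is therefore the $p$-adic digit sum of the top-degree exponent $d = i(p^n-1)$. Since $p^n - 1 = (p-1)(1 + p + \cdots + p^{n-1})$, for $i = 1$ we get $d = p^n - 1$, whose base-$p$ expansion is $(p-1, p-1, \ldots, p-1)$ with $n$ digits, so $s_p(d) = n(p-1)$. More generally for $1 \le i \le p-1$ one computes $s_p(i(p^n-1))$ explicitly; the hypothesis $n(p-1) > 2$ is precisely the condition guaranteeing that the relevant digit sum exceeds $2$, which is what makes the curve fall outside the supersingular regime captured by Theorem \ref{main}. First I would carry out this digit-sum calculation carefully to pin down $\sigma$ (or rather the relevant quantity controlling the first slope) as a function of $i$, $p$, and $n$, confirming it is strictly larger than what slope $1/2$ permits.

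The main obstacle is that Theorem \ref{weight-slope-thm} only gives a lower bound $NP_1 \ge 1/\sigma$, and when $\sigma > 2$ this bound is consistent with (but does not prove) non-supersingularity — a lower bound on the first slope does not by itself rule out the Newton polygon being a straight segment of slope $1/2$, since one would need to exclude the possibility that the first slope equals $1/2$ exactly while later slopes compensate. The genuine difficulty is thus to obtain information in the \emph{opposite} direction: one must show that the Newton polygon is \emph{not} the straight line of slope $1/2$, which requires analyzing the actual valuations of the coefficients $c_i$, not merely bounding them below. I expect this to be handled by the same Frobenius-Verschiebung cohomological techniques used for Theorem \ref{weight-slope-thm}, applied to the specific top term $a_d x^d$ with $d = i(p^n-1)$: the structure of the $r$-tiling sequences developed in Section \ref{rtiling} should force a break in the Newton polygon away from pure slope $1/2$. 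The non-supersingularity then follows because the first slope, computed exactly rather than merely bounded, cannot equal $1/2$.

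Concretely, I would (i) set up the cohomological / Dwork-type description of the Frobenius action specialized to the defining polynomial with leading exponent $d = i(p^n - 1)$; (ii) use the $r$-tiling machinery of Section \ref{rtiling} to control the breakpoints of the Newton polygon associated to this exponent; (iii) deduce that the smallest slope is strictly bounded away from $1/2$ under the hypothesis $n(p-1) > 2$; and (iv) conclude non-supersingularity via the slope characterization. The delicate point throughout is tracking how the condition $n(p-1) > 2$ enters the tiling-sequence combinatorics, since this is the exact numerical threshold separating the supersingular examples of Theorem \ref{main} from the non-supersingular family treated here.
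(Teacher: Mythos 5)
Your opening strategy is not viable: the first slope of the Newton polygon of a curve over a finite field is always at most $1/2$, because the slopes lie in $[0,1]$, are non-decreasing, and are symmetric about $1/2$ while the polygon must climb from $(0,0)$ to $(2g,g)$. This is why Theorem \ref{lambda} is stated only for $0\le\lambda\le 1/2$ and why the paper defines supersingularity as $NP_1=1/2$, the \emph{maximal} possible value. So ``$NP_1>1/2$'' can never be exhibited, and the related remark that ``$\sigma<2$ would give $NP_1>1/2$'' points in the same impossible direction; non-supersingularity must be proved by showing $NP_1<1/2$. You do recover from this midway --- you correctly observe that the lower bound of Theorem \ref{weight-slope-thm} cannot suffice and that one needs to bound the first slope from above --- but from that point on the proposal is only a statement of intent (``the tiling machinery should force a break away from slope $1/2$'') and never supplies the mechanism that actually produces the upper bound. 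That mechanism is the entire content of the paper's proof, and it is missing here.

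Concretely, the paper's argument in Section \ref{pfthm3} runs as follows (with the theorem's $n,i$ renamed $h,j$ there). A digit-sum computation gives $s_p(i(p^n-1))=n(p-1)$, and Lemma \ref{kbox} yields $\ord_p\bigl(C_{mp^{N+g-1}-j}(\cdot)\bigr)\ge\lceil N/(n(p-1))\rceil$, whence $NP_1\ge 1/(n(p-1))$ by part (i) of the Key Lemma. The reverse inequality requires part (ii) of the Key Lemma, whose hypothesis (c) demands a critical coefficient $C_{p^{n_0+g-1}-j}$ of \emph{strictly small} valuation. This is supplied by Lemma \ref{C-mod-p^k+!}: $C_{i(p^{bn}-1)}(\cdot)\equiv p^{b}\,a_d^{(p^{bn}-1)/(p^n-1)}\pmod{p^{b+1}}$, which pins the valuation down to exactly $b$ \emph{because} $a_d\ne 0$ --- this is where the leading coefficient enters, and your outline never uses it. Moreover, since the Key Lemma only yields $NP_1<\lambda$ for one $\lambda$ at a time, the paper constructs a decreasing sequence $\lambda_{n_0}$ converging to $1/(n(p-1))$ and invokes the remark following Lemma \ref{key} to conclude $NP_1=1/(n(p-1))$ exactly. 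The hypothesis $n(p-1)>2$ is then used only at the very last step, to conclude $1/(n(p-1))<1/2$; it does not enter the tiling combinatorics in the way you anticipate. Without the exact congruence for the critical coefficient and the limiting argument, the proposal does not close.
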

	
	Putting $u=1$ recovers the result of Blache. The curves in Theorem \ref{main-2} have
genus $(p^u-1)(d-1)/2$, assuming $(p,d)=1$.

\subsection{Other Connections}

We remark that generalized Artin-Schreier curves have come up (see \cite{AGGMY}) 
in the completely different problem of studying irreducible polynomials over finite fields
with certain coefficients fixed.
A key part of the proof in \cite{AGGMY} is to calculate  the L-polynomials of
three specific generalized Artin-Schreier curves.


	\section{Sharp Slope Estimate for Arbitrary Curves}\label{sse}

	This section states a little background for the slope estimates of curves
	over finite fields. Note that Theorem \ref{lambda} and Lemma \ref{res} hold valid when the base field is perfect
	of characteristic $p$.
	
	Let $W$ be the Witt vectors over $\mathbb{F}_Q$, and $\sigma$ the absolute Frobenius automorphism of $W$. Throughout this paper we assume that $X/\mathbb{F}_Q$ is a curve of genus $g$ with a rational point. Suppose there is a smooth proper lifting $X/W$ of $X$ to $W$, together with a lifted rational point $P$. The Frobenius endomorphism  $F$ (resp., Verschiebung endomorphism $V$) are $\sigma$ ( resp., $\sigma^{-1}$) linear maps on the first crystalline cohomology  $H_{crys}^1(X/W)$ of $X$ with $VF=FV=p$. It is know that $H_{crys}^1(X/W)$ is canonically isomorphic to the first de Rham cohomology $H_{dR}^1(X/W)$ of $X$, one gets induced $F$ and $V$ actions on $H_{dR}^1(X/W)$.  Let $L$ be the image of $H^0(X,\Omega_{X/W}^1)$ in $H_{dR}^1(X/W)$.

	\begin{thm} \cite{SE} \label{lambda}
		Let $\lambda$ be a rational number with $0 \leq \lambda \leq 1/2$. Then $NP_1(X/\mathbb{F}_Q) \geq \lambda$ if and only if
		$$
		p^{\left \lceil{n\lambda}\right \rceil} \: | \: V^{n+g-1}L
		$$
		for all integer $n \geq 1$.
	\end{thm}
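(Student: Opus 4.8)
The plan is to recast the entire statement as linear algebra on the Dieudonné module $M := H^1_{crys}(X/W) \cong H^1_{dR}(X/W)$, a free $W$-module of rank $2g$ carrying the injective semilinear operators $F$ (which is $\sigma$-linear) and $V$ (which is $\sigma^{-1}$-linear) with $FV=VF=p$ and $L \subset M$ the Hodge filtration of rank $g$. First I would recall that the normalized $p$-adic slopes of the $F$-isocrystal $(M\otimes K, F)$, where $K$ is the fraction field of $W$, coincide with the slopes of $NP(X/\mathbb{F}_Q)$: since $Q=p^s$ and $\sigma^s=\mathrm{id}$ on $W=W(\mathbb{F}_Q)$, the iterate $F^s$ is $W$-linear and realizes the geometric Frobenius, so its eigenvalues are the reciprocal roots of $L_X(T)$ and dividing their valuations by $s$ recovers the isocrystal slopes. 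Hence $NP_1(X/\mathbb{F}_Q)$ is the least slope $\lambda_1$ of $(M,F)$.

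The first reduction is to replace $F$ by $V$. Because $FV=p$, the slopes of $V$ are $\{1-\mu : \mu \text{ a slope of } F\}$; combining this with the Poincar\'e-duality symmetry $\mu \leftrightarrow 1-\mu$ of the $F$-slopes shows that the $F$-slopes lie in $[\lambda_1, 1-\lambda_1]$, so \emph{every} slope of $V$ lies in $[\lambda_1, 1-\lambda_1]$ and the least slope of $V$ is exactly $\lambda_1 = NP_1(X/\mathbb{F}_Q)$. The claim thus becomes: the least slope of $V$ is $\geq \lambda$ if and only if $p^{\lceil n\lambda\rceil}$ divides $V^{n+g-1}L$ for all $n\geq 1$. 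For the forward direction I would invoke the general principle that when all slopes of $V$ are $\geq\lambda$, the high iterates satisfy $V^m M \subseteq p^{\lceil m\lambda\rceil - c}M$ for a defect $c$ bounded by the combinatorial gap between the Newton and Hodge polygons; for the backward direction, a slope $<\lambda$ produces a vector whose $V$-iterates gain $p$-valuation too slowly, contradicting the divisibility once $n$ is large, provided this vector is visible to $L$.

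The step that does the real work, and the one I expect to be the main obstacle, is to run this slope-versus-divisibility comparison at the integral (lattice) level rather than the rational (isocrystal) level, and in particular to pin down the exact shift $g-1$ and the exact exponent $\lceil n\lambda\rceil$ for \emph{every} $n\geq 1$. The device that brings $L$ into play is the crystalline form of Mazur's theorem identifying the Hodge filtration with the image of Verschiebung modulo $p$, i.e. $L+pM = VM+pM$. Applying $V^n$ and using $V(pM)=pVM$ gives $V^n L + pV^n M = V^{n+1}M + pV^n M$, so $V^{n+g-1}L$ agrees with $V^{n+g}M$ modulo $p\,V^{n+g-1}M$; this lets me trade the divisibility of $V^{n+g-1}L$ for that of $V^{n+g}M$ up to a lower-order term, and the extra factor $gλ$ coming from the $g$-fold shift is precisely what absorbs the Newton--Hodge defect of a self-dual rank-$2g$ module with $g$ slopes in $[0,1/2]$. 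Establishing this constant exactly, and verifying in the backward direction that passing from $M$ to the rank-$g$ lattice $L$ does not hide the minimal-slope vector, is the delicate part; the rest is the formal slope calculus sketched above.
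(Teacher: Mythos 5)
The paper does not actually prove this statement: Theorem \ref{lambda} is quoted verbatim from Scholten--Zhu \cite{SE} as background, so there is no in-paper proof to compare against. Your framework is nonetheless the right one and matches how the result is proved in \cite{SE}: pass to the Dieudonn\'e module, identify $NP_1$ with the least slope of $V$ via $FV=p$ and the functional-equation symmetry, and bring $L$ into the picture through the identity $L+pM=VM+pM$. Those reductions are all correct.

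The genuine gap is exactly the step you defer. Your slope calculus only yields $V^mM\subseteq p^{\lceil m\lambda\rceil-c}M$ for an unspecified defect $c$, and the assertion that the shift by $g-1$ and the passage to $L$ absorb this defect \emph{exactly} is the entire content of the theorem: the claim is an equality of thresholds, with the precise exponent $\lceil n\lambda\rceil$ for \emph{every} $n\geq 1$, including small $n$ where asymptotic slope arguments say nothing (already $n=1$ demands the nontrivial integral statement $p\mid V^{g}L$ whenever $\lambda>0$). Establishing this requires a sharp Newton-versus-Hodge comparison for the iterates $V^n$ restricted to the rank-$g$ sublattice $L$ (the Mazur/Nygaard-type estimates that \cite{SE} relies on), together with the converse verification that a slope-$\mu<\lambda$ eigenvector is not killed modulo the relevant power of $p$ when one restricts from $M$ to $L$. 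You correctly name this as ``the delicate part,'' but naming it is not proving it; as written the proposal establishes the statement only up to a bounded, uncontrolled defect, i.e.\ it proves a qualitative version ($NP_1\geq\lambda$ iff the divisibility holds for all sufficiently large $n$ up to $O(1)$) rather than the sharp equivalence actually asserted.
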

	
	Let $\hat{X}/W$ be formal completion of $X/W$ at rational point $P$. If $x$ is a local parameter of $P$, Then every element of $H_{dR}^1(\hat{X}/W)$ can be represented as $h(x)\frac{dx}{x}$ for some $h(x) \in xW[[x]]$, and $F$ and $V$ acts as follows:
	\begin{equation*}
	\begin{split}
	F\left(h(x)\frac{dx}{x}\right) &= ph^{\sigma}(x^p)\frac{dx}{x}\\
	V\left(h(x)\frac{dx}{x}\right) &= ph^{\sigma^{-1}}(x^{1/p})\frac{dx}{x} \quad \text{where 
	$x^{m/p}=0$ if $p \not| \: m$}
	\end{split}
	\end{equation*}
	
	Denote the restriction map $H_{dR}^1(\hat{X}/W) \rightarrow H_{dR}^1(X/W)$ by res. 
	
	\begin{lemma} \cite{SE} \label{res}
		The $F$ and $V$ action on $H_{dR}^1(X/W)$ and $H_{dR}^1(\hat{X}/W)$ commutes with the restriction map 
		$$
		res: \: H_{dR}^1(X/W) \quad \rightarrow \quad H_{dR}^1(\hat{X}/W).
		$$
		Furthermore,
		$$
		res^{-1}(H_{dR}^1(\hat{X}/W))=F(H_{dR}^1(X/W)).
		$$
	\end{lemma}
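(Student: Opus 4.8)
The plan is to derive both assertions from the geometric origin of $F$ and $V$ together with the functoriality of de Rham cohomology, handling the commutation statement first and then bootstrapping it to the description of the image of $F$. Throughout I would use that $H_{dR}^1(X/W)$ is free of rank $2g$ over $W$ and hence torsion-free, and that the explicit local formulas for $F$ and $V$ on $H_{dR}^1(\hat{X}/W)$ are available.

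First I would establish that $res$ commutes with $F$. The Frobenius action on $H_{dR}^1$ is induced by the Frobenius morphism, which in the local coordinate $x$ at $P$ is the lift $x \mapsto x^{p}$ twisted by $\sigma$; this is exactly what is reflected in $F(h(x)\tfrac{dx}{x}) = p\,h^{\sigma}(x^{p})\tfrac{dx}{x}$. Since this morphism fixes $P$ and preserves its formal neighbourhood, the inclusion $\hat{X} \hookrightarrow X$ is compatible with Frobenius, and functoriality of cohomology gives $res \circ F = F \circ res$ directly. The commutation with $V$ I would then deduce formally rather than geometrically: $res$ is $W$-linear, and from the explicit formula for $F$ the map $F$ is injective on $H_{dR}^1(\hat{X}/W)$. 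Applying $res \circ F = F \circ res$ to $V\omega$ and using $FV = p$ gives
$$F(res(V\omega)) = res(FV\omega) = p\,res(\omega) = FV(res\,\omega) = F(V\,res\,\omega),$$
so injectivity of $F$ forces $res(V\omega) = V(res\,\omega)$.

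For the second assertion the easy inclusion is $F(H_{dR}^1(X/W)) \subseteq res^{-1}(H_{dR}^1(\hat{X}/W))$: by the commutation just proved it suffices to observe that the explicit formula sends any local representative into a form regular at $P$ with integral coefficients, i.e. into the claimed submodule, which is immediate from the substitution $x \mapsto x^{p}$ and the factor $p$. The reverse inclusion is the substance of the lemma: one must show that a global class $\omega$ whose restriction to $\hat{X}$ lands in $H_{dR}^1(\hat{X}/W)$ is necessarily $F$ of a global class. Here I would first note that $F$ is injective on $H_{dR}^1(X/W)$ (from $VF = p$ and torsion-freeness), so $F$ identifies $H_{dR}^1(X/W)$ with a $W$-lattice; one may then invert $F$ after tensoring with $\mathrm{Frac}(W)$ to produce a rational preimage $\eta$ with $F\eta = \omega$, and the task becomes proving $\eta$ is integral. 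I would do this by transporting integrality through $res$: since $res(F\eta) = res(\omega)$ is integral and regular and $res$ commutes with $F$, the element $F(res\,\eta)$ lies in the integral lattice, and one reads off integrality of $res\,\eta$, hence of $\eta$, by comparing against the integral structure on $\hat{X}$ and the Hodge filtration $L = H^0(X,\Omega_{X/W}^1)$.

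The main obstacle is precisely this reverse inclusion: controlling global integrality from integrality of the formal restriction. The difficulty is that $res$ need not be injective on the full cohomology, so one cannot simply pull back the integral structure; the argument must exploit the specific interaction between $L$, the Frobenius, and the freeness of $H_{dR}^1(X/W)$ to guarantee that a rational preimage under $F$ with integral formal restriction is globally integral. This is the step where the smoothness and properness of the lifting $X/W$ are genuinely used, and where I would expect to invoke the comparison between the de Rham cohomology of $X$ and its formal description near $P$ most carefully.
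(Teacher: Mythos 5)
First, a remark on the comparison you asked for: the paper gives no proof of this lemma at all --- it is quoted, with attribution, from Scholten--Zhu \cite{SE} --- so the only question is whether your argument stands on its own. It does not, and the trouble starts with the statement itself. As printed, the second assertion reads $res^{-1}(H_{dR}^1(\hat{X}/W))=F(H_{dR}^1(X/W))$; but $H_{dR}^1(\hat{X}/W)$ is the entire codomain of $res$, so its preimage is all of $H_{dR}^1(X/W)$, and the assertion would say $F$ is surjective. That is false for $g\geq 1$: the local formula $F(h(x)\frac{dx}{x})=ph^{\sigma}(x^p)\frac{dx}{x}$ already shows $F$ is divisible by $p$ on the Hodge filtration $L$, so the image of $F$ mod $p$ has rank at most $g<2g$. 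The intended statement --- the one actually invoked in the proof of Theorem \ref{keyT}, where one passes from $res(\omega)\in pH_{dR}^1(\hat{X}/W)$ to $\omega\in F(H_{dR}^1(X/W))$ --- is $res^{-1}(pH_{dR}^1(\hat{X}/W))=F(H_{dR}^1(X/W))$; a factor of $p$ has been dropped in the paper. Your proposal takes the printed version at face value: your ``easy inclusion'' reduces to the tautology that $res$ lands in its codomain, and your ``hard inclusion'' is an attempt to prove a false claim. Any correct argument must exploit the $p$-divisibility of the formal restriction.

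Even after reinserting the $p$, your argument for the hard inclusion does not close. You invert $F$ rationally to obtain $\eta$ with $F\eta=\omega$ and then try to deduce integrality of $\eta$ from properties of $res(\eta)$; but, as you yourself concede, $res$ is not injective, so no integrality can be transported back through it, and the role of $L$ is never made precise. This is the one genuinely nontrivial input of the whole section, and in \cite{SE} it rests on Nygaard's identification \cite{NN} of $H_{dR}^1(\hat{X}/W)$ with the Dieudonn\'e module of the formal group of the Jacobian at $P$, together with the structure of $F$ and $V$ there (reduction mod $p$, the identification of the kernel of $res$ mod $p$ with the image of $F$ mod $p$, and a lifting step); none of that machinery appears in your sketch. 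The first assertion (commutation of $res$ with $F$ and $V$) is essentially fine by functoriality, though your cancellation argument for $V$ uses injectivity of $F$ on $H_{dR}^1(\hat{X}/W)$, a module which has $p$-torsion (e.g.\ $x^{p-1}dx$ is killed by $p$ but is not exact over $W[[x]]$), so that step needs a separate justification.
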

	
	\section{Slope Estimate of Generalized Artin-Schreier Curves}
	
Assume that $X$ is a curve over $\mathbb{F}_Q$ defined by an affine equation $y^q-y=\tilde{f}(x)$ where $q=p^u$ and  $\tilde{f}(x)=\tilde{a}_dx^d+\tilde{a}_{d-1} x^{d-1}+...+\tilde{a}_1x$ and $ p \not| \: d$ and $\tilde{a}_d \neq 0$. Take a lifting $X/W$ defined by $y^q-y=f(x)$ where $f(x)=a_dx^d+{a}_{d-1} x^{d-1}+...+a_1x \in W[x]$ with $a_l \equiv \tilde{a}_l$ mod $p$ for all $l$. So $X/W$ has a rational point at the origin with a local parameter $x$.\\
	For any integer $N>0$ and $0 \leq i \leq q-2$ let $C_r(i,N)$ be the $x^r$ coefficient of the power expansion of the function $y^i(qy^{q-1}-1)^{p^N-1}$ at the origin $P$: 
	$$
	y^i(qy^{q-1}-1)^{p^N-1}= \sum_{r=0}^{\infty}C_r(i,N)x^r.
	$$ 
	\begin{lemma} \label{basis}
		The curve $X/W$ has genus $(q-1)(d-1)/2$ and for $q-2 \geq i \geq 0$, $j \geq 1$ and $di+qj \leq (q-1)(d-1)-2+q$ the differential forms 
		$$
		\omega_{ij} := x^jy^i(qy^{q-1}-1)^{-1}\frac{dx}{x}
		$$
		form a basis for $N$.
	\end{lemma}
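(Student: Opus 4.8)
The plan is to verify directly that each $\omega_{ij}$ is a regular differential, to count that there are exactly $g=(q-1)(d-1)/2$ of them, and to check that they are linearly independent; since $H^0(X/W,\Omega^1_{X/W})$ is free of rank $g$, this forces the $\omega_{ij}$ to be a $W$-basis of $L$. For the genus I would apply Riemann--Hurwitz to the degree-$q$ cover $x\colon X\to\mathbb P^1$, which (because $\gcd(d,q)=1$) is totally ramified only over $\infty$, with different exponent $(q-1)(d+1)$; this gives $2g-2=-2q+(q-1)(d+1)$, hence $g=(q-1)(d-1)/2$, as already recorded in the introduction.

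For regularity I would first rewrite the form. Differentiating $y^q-y=f(x)$ gives $(qy^{q-1}-1)\,dy=f'(x)\,dx$, so
\[
\omega_{ij}=x^{j-1}y^i\,\frac{dx}{qy^{q-1}-1}=x^{j-1}y^i\,\frac{dy}{f'(x)}.
\]
On the affine curve the two expressions together give regularity at every point: away from the zeros of $qy^{q-1}-1$ the first expression is visibly regular (here the hypothesis $j\ge 1$ is used, so that $x^{j-1}$ is a polynomial), while at a zero of $qy^{q-1}-1$ smoothness of $X/W$ forces $f'(x)\ne 0$, so the second expression is regular there. In the special fibre $qy^{q-1}-1\equiv -1$, so this cancellation issue only genuinely arises on the generic fibre.

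The decisive computation is at the single totally ramified point $P_\infty$ above $\infty$, where $v_\infty(x)=-q$ and $v_\infty(y)=-d$. Because $v_\infty(x)=-q$ is divisible by $p$, one cannot read off $v_\infty(dx)$ by naive differentiation; instead I would use $p\nmid d$, which gives $v_\infty(dy)=-d-1$, together with $v_\infty(f'(x))=-(d-1)q$. Substituting these into the $\frac{dy}{f'(x)}$ expression yields
\[
v_\infty(\omega_{ij})=d(q-1)-1-(di+qj),
\]
so $\omega_{ij}$ is holomorphic at $P_\infty$ exactly when $di+qj\le d(q-1)-1=(q-1)(d-1)-2+q$, which is precisely the stated range. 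I expect this valuation computation at the wild point to be the main obstacle, and it is the only place where the coprimality $\gcd(d,p)=1$ is truly essential.

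It then remains to count and to check independence. For the count, for each fixed $i$ the admissible $j$ satisfy $1\le j\le\lfloor(d(q-1-i)-1)/q\rfloor$; summing over $0\le i\le q-2$ and writing $m=q-1-i$, the evaluation $\gcd(d,q)=1$ lets one replace $\lfloor(dm-1)/q\rfloor$ by $\lfloor dm/q\rfloor$, so the total equals $\sum_{m=1}^{q-1}\lfloor dm/q\rfloor=(d-1)(q-1)/2=g$ by the standard evaluation of this Gauss sum. Linear independence is immediate: after factoring out the common nonzero differential $(qy^{q-1}-1)^{-1}\frac{dx}{x}$, any relation $\sum c_{ij}x^jy^i=0$ splits along the basis $1,y,\dots,y^{q-1}$ of the function field over $\mathbb F_Q(x)$ and then along distinct powers of $x$, forcing all $c_{ij}=0$. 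Having produced $g$ linearly independent regular differentials in a rank-$g$ module, and noting that they reduce modulo $p$ to $-x^{j-1}y^i\,dx$, which form a basis of the special fibre, Nakayama's lemma finishes the proof that they are a $W$-basis of $L$.
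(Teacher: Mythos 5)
Your proof is correct, but it is worth noting that the paper does not actually prove this lemma: its entire ``proof'' is the remark that the proof of Lemma~3.1 of Scholten--Zhu \cite{SE}, stated for primes $p$, goes through verbatim for prime powers $q$. What you have written is essentially that argument, carried out in full for general $q$: the genus via Riemann--Hurwitz with different exponent $(q-1)(d+1)$ at the totally ramified place over $\infty$, the identity $\omega_{ij}=x^{j-1}y^i\,dy/f'(x)$ to handle regularity on the affine part and to compute $v_\infty(\omega_{ij})=d(q-1)-1-(di+qj)$ (correctly using $p\nmid d$ to get $v_\infty(dy)=-d-1$ rather than differentiating $x$), the lattice-point count $\sum_{m=1}^{q-1}\lfloor dm/q\rfloor=(d-1)(q-1)/2$ to match the genus, and Nakayama to pass from linear independence over the special fibre to a $W$-basis of $L$ (the ``$N$'' in the statement is a typo for the module $L$ defined in Section~3, and you have read it correctly). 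The only points I would flag are cosmetic: the different exponent $(q-1)(d+1)$ deserves a one-line justification (conductor--discriminant over the $q-1$ nontrivial characters of the elementary abelian cover, each of conductor $d+1$), and the step $\lfloor(dm-1)/q\rfloor=\lfloor dm/q\rfloor$ should explicitly invoke $q\nmid dm$ for $1\le m\le q-1$, which is where $\gcd(d,q)=1$ enters the count. Compared with the paper, your version buys a self-contained and checkable statement at the cost of length; the paper buys brevity at the cost of asking the reader to verify that nothing in \cite{SE} secretly uses $u=1$.
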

	\begin{proof}
		The proof of Lemma 3.1 in \cite{SE} stated for primes $p$ but is also valid for prime powers $q$.
	\end{proof}  
	
	\begin{lemma} \label{coprime} For $m$ be a positive integer. If $p \not | m$ then $x^m(qy^{q-1}-1)^{-1}\frac{dx}{x} \equiv 0 $ mod $q$ in $H_{dR}^1(\hat{X}/W)$.
	\end{lemma}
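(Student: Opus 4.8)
The plan is to exploit the fact that, working modulo $q$, the factor $(qy^{q-1}-1)^{-1}$ degenerates to the constant $-1$. First I would record that near the rational point $P=(0,0)$ the coordinate $y$ is a power series in the local parameter $x$ with coefficients in $W$: since $f(0)=0$ and the partial derivative $\partial_y(y^q-y)$ equals $-1$ at $P$, which is a unit, the formal implicit function theorem gives $y\in xW[[x]]$. Hence $qy^{q-1}\in qW[[x]]$, and expanding the geometric series
$$
(qy^{q-1}-1)^{-1}=-\bigl(1+qy^{q-1}+(qy^{q-1})^2+\cdots\bigr),
$$
which converges $q$-adically since the $k$-th term is divisible by $q^k$, shows that $(qy^{q-1}-1)^{-1}\equiv -1 \pmod q$ in $W[[x]]$.

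Substituting this expansion into the differential form, I would separate the constant part from the $q$-divisible tail, writing
$$
x^m(qy^{q-1}-1)^{-1}\frac{dx}{x}=-x^{m-1}\,dx\;-\;q\,G(x)\frac{dx}{x},
$$
where $G(x)=x^m\bigl(y^{q-1}+qy^{2(q-1)}+\cdots\bigr)$ lies in $xW[[x]]$ (because $m\ge 1$) and therefore represents a genuine class in $H_{dR}^1(\hat X/W)$. The hypothesis $p\nmid m$ now enters through the first term only: $m$ is then a unit in $W$, so $-x^{m-1}\,dx=d(-x^m/m)$ is exact and hence zero in cohomology.

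Combining the two observations, the class of $x^m(qy^{q-1}-1)^{-1}\frac{dx}{x}$ in $H_{dR}^1(\hat X/W)$ equals $q\cdot[-G(x)\frac{dx}{x}]$, so it lies in $q\,H_{dR}^1(\hat X/W)$, which is precisely the assertion $x^m(qy^{q-1}-1)^{-1}\frac{dx}{x}\equiv 0\pmod q$.

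I do not expect a serious obstacle; the computation is short. The one point needing care is the final bookkeeping, where the two vanishing mechanisms must be kept distinct: the constant piece $-x^{m-1}\,dx$ disappears because it is \emph{exact} (this is exactly where $p\nmid m$ is used), whereas the tail disappears modulo $q$ because it is literally $q$ times a legitimate cohomology class (this uses $m\ge1$ to ensure $G\in xW[[x]]$). Verifying that $y\in xW[[x]]$, so that $qy^{q-1}$ is honestly divisible by $q$ in the power-series ring, is the only input that is not purely formal, and it follows at once from the shape of the defining equation.
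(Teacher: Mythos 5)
Your proof is correct and follows essentially the same route as the paper: reduce $(qy^{q-1}-1)^{-1}$ to $-1$ modulo $q$ and observe that $-x^{m-1}\,dx = d(-x^m/m)$ is exact because $p \nmid m$ makes $m$ a unit in $W$. Your version merely spells out the intermediate justifications (that $y \in xW[[x]]$ and that the geometric-series tail is a genuine class multiplied by $q$) that the paper leaves implicit.
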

	\begin{proof}
		If $p \not | m$, then
		$$
		x^m(qy^{q-1}-1)^{-1}\frac{dx}{x} \equiv -x^m\frac{dx}{x} \equiv -d\bigg( \frac{x^m}{m} \bigg) \: \text{mod} \: q
		$$
		which is cohomologically zero in  $H_{dR}^1(\hat{X}/W)$.
	\end{proof} 
	
	\begin{lemma} \label{rel} For all nonnegative integer $a$ and $r$ we have
		$$
		C_r(i,N+a) \equiv C_r(i,N) \: \text{mod} \: p^{N+1}.
		$$
	\end{lemma}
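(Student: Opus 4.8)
The plan is to work entirely inside the power series ring $W[[x]]$, where $y\in xW[[x]]$ denotes the local expansion at the origin $P$ of the function $y$ on $X/W$. Writing $g(x)=qy^{q-1}-1$, the quantities $C_r(i,N)$ are by definition the coefficients of the power series $y^i\,g^{p^N-1}$. Thus the statement reduces to comparing the two series $y^i g^{p^{N+a}-1}$ and $y^i g^{p^N-1}$ coefficient by coefficient, and it suffices to show that their difference lies in $p^{N+1}W[[x]]$. Since
$$y^i g^{p^{N+a}-1}-y^i g^{p^N-1}=y^i g^{p^N-1}\bigl(g^{\,p^N(p^a-1)}-1\bigr),$$
and $y^i g^{p^N-1}\in W[[x]]$ has coefficients in $W$, everything comes down to proving the single congruence $g^{\,p^N(p^a-1)}\equiv 1 \pmod{p^{N+1}}$ in $W[[x]]$.

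The first step is to record that $g\equiv -1 \pmod p$: indeed $g+1=qy^{q-1}$ and $q=p^u$ with $u\ge 1$, so $p\mid q$. The main tool is then the elementary congruence that for any $h\in W[[x]]$ and any constant $c\in W$, if $h\equiv c \pmod{p^m}$ with $m\ge 1$ then $h^p\equiv c^p \pmod{p^{m+1}}$; this follows by expanding $(c+p^m t)^p$ for $t\in W[[x]]$ binomially, using that $p\mid\binom{p}{j}$ for $1\le j\le p-1$ and that $mj\ge m+1$ for $j\ge 2$. Iterating this $N$ times, starting from $g\equiv -1 \pmod p$, yields $g^{p^N}\equiv(-1)^{p^N}\pmod{p^{N+1}}$.

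It remains to raise this to the power $p^a-1$ and identify the resulting sign. Set $\varepsilon=(-1)^{p^N}$. If $p$ is odd then $p^N$ is odd so $\varepsilon=-1$, and since $p^a-1$ is even we get $\varepsilon^{p^a-1}=1$; if $p=2$ then $p^N=2^N$ is even (here we use $N\ge 1$), so $\varepsilon=1$ and again $\varepsilon^{p^a-1}=1$. Because congruences are preserved under taking powers, $g^{\,p^N(p^a-1)}=\bigl(g^{p^N}\bigr)^{p^a-1}\equiv\varepsilon^{p^a-1}=1\pmod{p^{N+1}}$, which is exactly the congruence identified above. Extracting the coefficient of $x^r$ then gives $C_r(i,N+a)\equiv C_r(i,N)\pmod{p^{N+1}}$ for every $r$.

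I do not expect any genuine obstacle here; the only points requiring care are the bookkeeping of the exponent difference $p^{N+a}-1-(p^N-1)=p^N(p^a-1)$ and the parity argument for the sign $\varepsilon^{p^a-1}$, where the characteristic $2$ case must be separated out and uses $N\ge 1$.
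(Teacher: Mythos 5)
Your proof is correct and follows essentially the same route as the paper's: both factor $g^{p^{N+a}-1}=g^{p^{N}-1}\cdot g^{p^{N}(p^{a}-1)}$ and reduce everything to the congruence $g^{p^{N}(p^{a}-1)}\equiv 1 \pmod{p^{N+1}}$ in $W[[x]]$. The only (immaterial) difference is mechanical: the paper gets $(1-qy^{q-1})^{p^{N}}\equiv 1 \pmod{p^{N+1}}$ by expanding binomially and using $\binom{p^{N}}{l}\equiv 0 \pmod{p^{N+1-l}}$ together with the factor $q^{l}$, whereas you iterate the elementary step $h\equiv c \pmod{p^{m}}\Rightarrow h^{p}\equiv c^{p}\pmod{p^{m+1}}$ starting from $g\equiv -1\pmod{p}$ and then handle the sign; both are valid.
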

	\begin{proof}
		We have $\binom{p^N}{l} \equiv 0$ mod $p^{N+1-l}$ if $N+1 \geq l \geq 1$. Thus
		$$
		(1-qy^{q-1})^{p^N}= \sum_{l=0}^{p^N}\binom{p^N}{l}(-qy^{q-1})^l \equiv 1 \: \text{mod} \: p^{N+1}.
		$$
		Therefore we have 
		$$
		y^i(qy^{q-1}-1)^{p^{N+a}-1}=	y^i(qy^{q-1}-1)^{p^{N}-1}(1-qy^{q-1})^{p^N(p^a-1)} \equiv 	y^i(qy^{q-1}-1)^{p^{N}-1}  \: \text{mod} \: p^{N+1}.
		$$
	\end{proof}
	
	\begin{thm}\label{keyT}
		Let $\lambda$ be a rational number with $0 \leq \lambda \leq 1/2$. Suppose there exists an integer $n_0$ such that\\
		(i) for all $i,j$ within the range $0 \leq i \leq q-2$, $j \geq 1$ and $di+pj \leq (q-1)(d-1)-2+q$ and for all $m \geq 1$, $1 \leq n < n_0$, we have
		$$
		\text{ord}_p \big ( C_{mp^{n+g-1}-j}(i,n+g-2)\big) \geq \left \lceil{n\lambda}\right \rceil ;
		$$
		(ii) for all $m \geq 2$ we have 
		$$
		\text{ord}_p \big ( C_{mp^{n_0+g-1}-j}(i,n+g-2)\big) \geq \left \lceil{n_0\lambda}\right \rceil .
		$$
		Then
		$$
		\begin{cases}
		p^{\left \lceil{n\lambda}\right \rceil} \: \mid \: V^{n+g-1}(\omega_{ij}) & \text{if } \: n <n_0 \\
		p^{\left \lceil{n_0\lambda}\right \rceil-1} \: \mid \: V^{n_0+g-1}(\omega_{ij}) & \text{if } \:  n =n_0. 
		\end{cases}
		$$
		Furthermore, we have 
		$$
		V^{n_0+g-1}(\omega_{ij})\equiv C^{\sigma^{-(n_0+g-1)}}_{p^{n_0+g-1}-j}(i,n_0+g-2)(\omega_{0,1}) \mod \left \lceil{n_0\lambda}\right \rceil .
		$$
	\end{thm}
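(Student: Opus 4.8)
The plan is to move the entire computation into the formal cohomology $H^1_{dR}(\hat{X}/W)$, where $V$ is given by the explicit power-series rule, and to read the answer back using Lemma \ref{res}, which guarantees that $V^{n+g-1}$ commutes with the restriction map. First I would use Lemma \ref{rel} to replace the factor $(qy^{q-1}-1)^{-1}$ appearing in $\omega_{ij}$ by its truncation $(qy^{q-1}-1)^{p^N-1}$ with $N=n+g-2$: because $(1-qy^{q-1})^{p^N}\equiv 1\bmod p^{N+1}$, this produces the congruence $\omega_{ij}\equiv\sum_{r\ge 0}C_r(i,N)\,x^{r+j}\,\frac{dx}{x}\bmod p^{N+1}$ as forms on $\hat{X}$. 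The purpose of this step is to rewrite the opaque form $\omega_{ij}$ as an explicit power series whose coefficients are exactly the quantities $C_r(i,N)$ that the hypotheses control.

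Next I would apply $V^{n+g-1}$ monomial by monomial. Writing $k=n+g-1$, one application of $V$ divides each exponent by $p$ (killing the monomials whose exponent is not divisible by $p$), twists the coefficient by $\sigma^{-1}$, and multiplies by $p$; after $k$ steps only the exponents divisible by $p^{k}$ survive, and one obtains $V^{k}(\omega_{ij})\equiv p^{k}\sum_{m\ge 1}C_{mp^{k}-j}(i,N)^{\sigma^{-k}}\,x^{m}\,\frac{dx}{x}\bmod p^{2k}$ (here $N+1+k=2k$, and the error is harmless since $\lceil n\lambda\rceil\le k$). I would then reduce this series in $H^1_{dR}(\hat{X}/W)$: the monomials $x^m\frac{dx}{x}$ with $p\nmid m$ are exact, hence zero, and the remaining ones are rearranged using Lemma \ref{coprime} together with the relation $(qy^{q-1}-1)\,dy=f'(x)\,dx$ coming from the defining equation, so as to recognise the residual class and, in particular, to isolate the contribution matching $\omega_{0,1}$.

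With this expansion available I would then treat each $n$ in turn. For $n<n_0$, hypothesis (i) says every coefficient $C_{mp^{k}-j}(i,n+g-2)$ with $m\ge 1$ has $\ord_p\ge\lceil n\lambda\rceil$, so the whole class $V^{n+g-1}(\omega_{ij})$ is divisible by $p^{\lceil n\lambda\rceil}$. For $n=n_0$, hypothesis (ii) forces the part with $m\ge 2$ to vanish modulo $p^{\lceil n_0\lambda\rceil}$, leaving only the $m=1$ term; this yields simultaneously the asserted congruence $V^{n_0+g-1}(\omega_{ij})\equiv C_{p^{n_0+g-1}-j}(i,n_0+g-2)^{\sigma^{-(n_0+g-1)}}\,\omega_{0,1}\bmod p^{\lceil n_0\lambda\rceil}$ and the divisibility by $p^{\lceil n_0\lambda\rceil-1}$, the single lost power reflecting that the $m=1$ coefficient is controlled one $p$-power short of the others.

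The hard part will be the passage between $\hat{X}$ and $X$, because the restriction map does not preserve $p$-divisibility: for instance $\mathrm{res}(\omega_{0,1})$ is divisible by $q$ although $\omega_{0,1}$ is not, and the surviving monomials of $\mathrm{res}(V^{k}\omega_{ij})$ begin at $x^{p}$ whereas $\mathrm{res}(\omega_{0,1})$ begins at $x^{q}$. Reconciling these — balancing the factor $p^{k}$ produced by $V^{k}$ against the $q$-divisibility of the restricted basis forms, which is exactly what Lemma \ref{res} (the identity $\mathrm{res}^{-1}(H^1_{dR}(\hat{X}/W))=F(H^1_{dR}(X/W))$) and the relation $VF=FV=p$ are there to control — and verifying that after the cohomological reduction the coefficient sitting in each basis slot is precisely the twisted $C_{mp^{k}-j}(i,n+g-2)$, is where the genuine difficulty lies.
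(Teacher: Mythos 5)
Your overall skeleton --- expand $\omega_{ij}$ so that its coefficients become the $C_r(i,N)$, push powers of $V$ through monomial by monomial, and feed in the hypotheses on $\ord_p\bigl(C_{mp^{n+g-1}-j}(i,n+g-2)\bigr)$ --- is the same as the paper's, and you have correctly located the difficulty in the passage between $\hat{X}$ and $X$. But locating it is not resolving it, and the step you leave open is the entire content of the proof. Your intermediate claim $V^{k}(\omega_{ij})\equiv p^{k}\sum_{m\ge1}C_{mp^{k}-j}(i,N)^{\sigma^{-k}}x^{m}\frac{dx}{x}\bmod p^{2k}$ is a statement in $H_{dR}^1(\hat{X}/W)$, where every application of $V$ mechanically produces a factor of $p$; if divisibility there transferred to $H_{dR}^1(X/W)$ you would conclude $p^{n+g-1}\mid V^{n+g-1}(\omega_{ij})$ unconditionally, hence via Theorem \ref{lambda} that every generalized Artin--Schreier curve is supersingular, contradicting Theorem \ref{main-2}. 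So the divisibility you compute lives in the wrong module, and none of the hypotheses of the theorem has actually been converted into divisibility where it counts.

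The paper supplies exactly the machinery you are missing, and it cannot be compressed into a single application of $V^{n+g-1}$. The argument is an induction on $n$ with inductive hypothesis $p^{\left\lceil(n-1)\lambda\right\rceil}\mid V^{n+g-2}(\omega_{ij})$ in $H_{dR}^1(X/W)$. At each step one applies only $V^{n+g-2}$, shows that $res\bigl(V^{n+g-2}\omega_{ij}/p^{\left\lceil n\lambda\right\rceil-1}\bigr)$ lies in $pH_{dR}^1(\hat{X}/W)$, and only then invokes $res^{-1}(pH_{dR}^1(\hat{X}/W))=F(H_{dR}^1(X/W))$ together with $VF=p$: the final application of $V$ converts membership in the image of $F$ into one genuine factor of $p$ upstairs. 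Each power of $p$ in the conclusion costs one such round trip, which is why the exponent is $\left\lceil n\lambda\right\rceil$ rather than $n+g-1$. Moreover, your replacement of $(qy^{q-1}-1)^{-1}$ by its truncation modulo $p^{N+1}$ discards structure the argument needs: the paper instead writes $\omega_{ij}=x^jy^i(qy^{q-1}-1)^{p^{N}-1}h(x)^{p^{N}}\frac{dx}{x}$ exactly and decomposes $h(x)^{p^{N}}=h^{\sigma^{N}}(x^{p^{N}})+ph_1^{\sigma^{N-1}}(x^{p^{N-1}})+\cdots+p^{N}h_{N}(x)$ (the Nygaard lemma), because the intermediate terms $p^{l}\sum_m C_{mp^{N-l}-j}(\cdots)\,V^{l}\bigl(x^mh_l(x)\frac{dx}{x}\bigr)$ do not vanish modulo the relevant power of $p$ for free; they are controlled by hypothesis (i) applied at the lower levels $n-1-l$ together with Lemma \ref{rel}. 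Without the induction, the Nygaard decomposition, and this $F$--$V$ bookkeeping, the proposal does not close.
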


	\begin{proof}
		We will prove by induction. Suppose $n\geq 1$ and 
			\begin{equation} \label{eq: div1}
		p^{\left \lceil{(n-1)\lambda}\right \rceil} \: | \: V^{n+g-2}(\omega_{ij}).
	\end{equation}
		Note this is trivially true if $n = 1$.\\
		Write $h(x)=(qy^{q-1}-1)^{-1} \in W[[x]]$. By (\cite{NN}, Lemma 2.2), we have
		$$
		h(x)^{p^{n+g-2}}=h^{\sigma^{n+g-2}}(x^{p^{n+g-2}})+ph_1^{\sigma^{n+g-3}}(x^{p^{n+g-3}})+...+p^{n+g-2}h_{n+g-2}(x)
		$$
		for some power series $h_1(x),h_2(x),...,h_{n+g-2}(x) \in W[[x]]$. Thus the power series expansion of $\omega_{ij}$ is
		\begin{equation*} \label{}
		\begin{split}
		res(\omega_{ij}) \quad &= \quad res\bigg(x^jy^i(qy^{q-1}-1)^{-1}\frac{dx}{x}\bigg)\\
		&= \quad res\bigg(x^jy^i(qy^{q-1}-1)^{p^{n+g-2}-1}h(x)^{p^{n+g-2}}\frac{dx}{x}\bigg)\\
		&= \quad \sum_{r=0}^{\infty}C_r(i,n+g-2)x^{r+j}h^{\sigma^{n+g-2}}\left(x^{p^{n+g-2}}\right)\frac{dx}{x}\\
		&\quad \quad +p\sum_{r=0}^{\infty}C_r(i,n+g-2)x^{r+j}h_1^{\sigma^{n+g-3}}\left(x^{p^{n+g-3}}\right)\frac{dx}{x}\\
		&\quad \quad +...\\
		&\quad \quad +p^{n+g-2}\sum_{r=0}^{\infty}C_r(i,n+g-2)x^{r+j}h_{n+g-2}(x)\frac{dx}{x}.
		\end{split}
		\end{equation*}
		Apply $V^{n+g-2}$ to the first differential form above. Since $V$ action commutes with the restriction map (by Lemma \ref{res}), we have
		\begin{equation} \label{resV-eq}
		\begin{split}
		res(V^{n+g-2}\omega_{ij}) &= \quad \sum_{m=1}^{\infty}C_{mp^{n+g-2}-j}^{\sigma^{-(n+g-2)}}(i,n+g-2)x^mh(x)\frac{dx}{x}\\
		&\quad \quad +p\sum_{m=1}^{\infty}C_{mp^{n+g-3}-j}^{\sigma^{-(n+g-2)}}(i,n+g-2)V\bigg(x^mh_1(x)\frac{dx}{x}\bigg)\\
		&\quad \quad +p^2\sum_{m=1}^{\infty}C_{mp^{n+g-4}-j}^{\sigma^{-(n+g-2)}}(i,n+g-2)V\left(x^mh_2(x)\frac{dx}{x}\right)\\
		&\quad \quad +...\\
		&\quad \quad +p^{\left \lceil{n\lambda}\right \rceil -1}\sum_{m=1}^{\infty}C_{mp^{n+g-1-\left \lceil{n\lambda}\right \rceil}-j}^{\sigma^{-(n+g-2)}}(i,n+g-2)V^{\left \lceil{n\lambda}\right \rceil -1}\bigg(x^mh_{\left \lceil{n\lambda}\right \rceil -1}(x)\frac{dx}{x}\bigg)\\
		&\quad \quad +p^{\left \lceil{n\lambda}\right \rceil}\beta
		\end{split}
		\end{equation}
		for some $\beta \in H_{dR}^1(\hat{X}/W)$.\\
		By the hypothesis, $p^{\left \lceil{n\lambda}\right \rceil -1}$ divides $C_{mp^{n+g-2}-j}(i,n+g-3)$. For all $m \geq 1$, by lemma \ref{rel},
		\begin{equation} \label{eq: div2}
			p^{\left \lceil{n\lambda}\right \rceil -1} \quad | \quad C_{mp^{n+g-2}-j}(i,n+g-2).
		\end{equation}
		For $m$ coprime to $p$ it follows from Lemma \ref{coprime} that $p$ divides $x^mh(x)\frac{dx}{x}$. Thus 
		$$
		p^{\left \lceil{n\lambda}\right \rceil} \quad | \quad C_{mp^{n+g-2}-j}(i,n+g-2)x^mh(x)\frac{dx}{x}.
		$$
		Otherwise, except possibly when $n=n_0$ and $m=p$, we have
		$$
		p^{\left \lceil{n\lambda}\right \rceil} \quad | \quad C_{(\frac{m}{p})p^{n+g-1}-j}(i,n+g-2)x^mh(x)\frac{dx}{x}.
		$$
		Therefore,
		
		\begin{align}	\label{eq: div3}
		&\sum_{m=1}^{\infty}C_{mp^{n+g-2}-j}^{\sigma^{-(n+g-2)}}(i,n+g-2)x^mh(x)\frac{dx}{x}\\  &\equiv \sum_{m'=1}^{\infty}C_{mp^{n+g-1}-j}^{\sigma^{-(n+g-2)}}(i,n+g-2)x^{pm'}h(x)\frac{dx}{x}\nonumber\\
		&\equiv 
		\begin{cases}
		0 \mod p^{\left \lceil{n\lambda}\right \rceil} &\text{if } n < n_0\\
		C_{p^{n_0+g-1}-j}^{\sigma^{-(n_0+g-2)}}(i,n+g-2)x^{p}h(x)\frac{dx}{x} \mod p^{\left \lceil{n\lambda}\right \rceil}  &\text{if } n = n_0. \nonumber
		\end{cases}
		\end{align}

		For all integers $l \geq 1$, by the hypothesis of the theorem, we obtain
		$$
		\text{ord}_p \big ( C_{mp^{n+g-l-2}-j}(i,n+g-l-3)\big) \geq \left \lceil{(n-1-l)\lambda}\right \rceil  \geq \left \lceil{n\lambda}\right \rceil -l .
		$$
		So by Lemma \ref{rel}, we have $
		\text{ord}_p \big ( C_{mp^{n+g-l-2}-j}(i,n+g-2)\big) \geq \left \lceil{n\lambda}\right \rceil -l .
		$ So  $p^{\left \lceil{n\lambda}\right \rceil}$ divides every sum of (\ref{resV-eq}) except possibly the one on the first line. Combining this information with (\ref{eq: div1}), (\ref{eq: div2}) and (\ref{eq: div3}) yields for all $n<n_0$ 
		$$
		res\bigg( \frac{V^{n+g-2}\omega_{ij}}{p^{\left \lceil{n\lambda}\right \rceil -1}} \bigg) \quad \in \quad pH_{dR}^1(\hat{X}/W).
		$$
		Hence for such $n$ Lemma \ref{res} implies
		$$
		\frac{V^{n+g-2}\omega_{ij}}{p^{\left \lceil{n\lambda}\right \rceil -1}} \quad \in \quad F(H_{dR}^1(X/W))
		$$
		so
		$$
		\frac{V^{n+g-1}\omega_{ij}}{p^{\left \lceil{n\lambda}\right \rceil -1}} \quad \in \quad VF(H_{dR}^1(X/W))=pH_{dR}^1(X/W)
		$$
		which proves the induction hypothesis. 
		
		If $n=n_0$ then the above implies that
		$$
		res\bigg( \frac{V^{n_0+g-2}\omega_{ij}}{p^{\left \lceil{n_0\lambda}\right \rceil -1}} \bigg)- \frac{1}{p^{\left \lceil{n_0\lambda}\right \rceil -1}}C_{p^{n_0+g-1}-j}^{\sigma^{-(n_0+g-2)}}(i,n+g-2)x^{p}h(x)\frac{dx}{x}
		$$
		lies in  $pH_{dR}^1(\hat{X}/W)$. Lemma \ref{res} implies
		$$
		\frac{V^{n_0+g-2}\omega_{ij}}{p^{\left \lceil{n_0\lambda}\right \rceil -1}} - \frac{1}{p^{\left \lceil{n_0\lambda}\right \rceil -1}}C_{p^{n_0+g-1}-j}^{\sigma^{-(n_0+g-2)}}(i,n+g-2)x^{p}\omega_{0,p}
		$$ 
		lies in $F(H_{dR}^1(X/W))$. Hence 
		$$
		\frac{V^{n_0+g-1}\omega_{ij}}{p^{\left \lceil{n_0\lambda}\right \rceil -1}} - \frac{1}{p^{\left \lceil{n_0\lambda}\right \rceil -1}}C_{p^{n_0+g-1}-j}^{\sigma^{-(n_0+g-1)}}(i,n+g-2)x^{p}V(\omega_{0,p})
		$$ 
		lies in $VF(H_{dR}^1(X/W))=p(H_{dR}^1(X/W))$.
	\end{proof} 
	
The next Lemma will be referred to as the Key Lemma.
	
	\begin{lemma} \label{key}
		Let $\lambda$ be a rational number with $0 \leq \lambda \leq \frac12$.\\
		
		(i) if for all $i,j$ within  range and for all $m \geq 1$, $n \geq 1$ we have
		$$
		\text{ord}_p \big ( C_{mp^{n+g-1}-j}(i,n+g-2)\big) \geq \left \lceil{n\lambda}\right \rceil 
		$$
		then
		$$
		NP_1(X/{\mathbb{F}_Q}) \geq \lambda.
		$$
		(ii) Let i,j be within range.\\
		(a) Let $n_0 \geq 1$.  Suppose that  and for all $m \geq 1$, $1 \leq n < n_0$ we have
		$$
		\text{ord}_p \big ( C_{mp^{n+g-1}-j}(i,n+g-2)\big) \geq \left \lceil{n\lambda}\right \rceil ;
		$$
		(b) suppose that for all $m \geq 2$ we have 
		$$
		\text{ord}_p \big ( C_{mp^{n_0+g-1}-j}(i,n+g-2)\big) \geq \left \lceil{n_0\lambda}\right \rceil;
		$$
		(c) suppose 
		$$
		\text{ord}_p \big ( C_{p^{n_0+g-1}-j}(i,n+g-2)\big) < \left \lceil{n_0\lambda}\right \rceil;
		$$
		then 
		$$
		NP_1(X/{\mathbb{F}_Q}) < \lambda.
		$$
	\end{lemma}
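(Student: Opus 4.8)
The plan is to read both parts off Theorem \ref{keyT}, combined with the Frobenius--Verschiebung criterion of Theorem \ref{lambda} and the fact that the forms $\omega_{ij}$ constitute a $W$-basis of the module $L$ of Theorem \ref{lambda} (this is Lemma \ref{basis}, whose $N$ is this $L$). The organizing principle is that a divisibility statement $p^{k}\mid V^{n+g-1}L$, meaning $V^{n+g-1}L\subseteq p^{k}H_{dR}^1(X/W)$, holds if and only if $p^{k}\mid V^{n+g-1}(\omega_{ij})$ for each basis form $\omega_{ij}$ (using that $p^{k}H_{dR}^1(X/W)$ is a $W$-submodule, so stable under the $\sigma$-twisted coefficients produced by $V^{n+g-1}$). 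This lets me transfer between the coefficient hypotheses and the slope conclusion.

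For part (i) I would fix $i,j$ within range and an arbitrary $n\geq 1$, and apply Theorem \ref{keyT} with any $n_0>n$. The hypothesis of (i), namely $\ord_p\!\big(C_{mp^{n+g-1}-j}(i,n+g-2)\big)\geq\lceil n\lambda\rceil$ for all $m\geq 1$ and all $n\geq 1$, is stronger than conditions (i) and (ii) of Theorem \ref{keyT} for this $n_0$ (indeed for every $n_0$). The conclusion in the range $n<n_0$ then gives $p^{\lceil n\lambda\rceil}\mid V^{n+g-1}(\omega_{ij})$. As $n$ and $(i,j)$ are arbitrary and the $\omega_{ij}$ span $L$, this yields $p^{\lceil n\lambda\rceil}\mid V^{n+g-1}L$ for every $n\geq 1$, so the ``if'' direction of Theorem \ref{lambda} gives $NP_1(X/\mathbb{F}_Q)\geq\lambda$.

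For part (ii) I would take the same $(i,j)$ and $n_0$ supplied by the hypotheses. Conditions (a) and (b) are exactly conditions (i) and (ii) of Theorem \ref{keyT}, so its ``Furthermore'' clause applies and gives
$$V^{n_0+g-1}(\omega_{ij}) \equiv C^{\sigma^{-(n_0+g-1)}}_{p^{n_0+g-1}-j}(i,n_0+g-2)\,\omega_{0,1}\pmod{p^{\lceil n_0\lambda\rceil}}.$$
Condition (c) states that $C_{p^{n_0+g-1}-j}(i,n_0+g-2)$ has $p$-adic valuation strictly below $\lceil n_0\lambda\rceil$; since $\sigma$ is an automorphism of $W$ it preserves $\ord_p$, so the $\sigma^{-(n_0+g-1)}$-twisted coefficient has the same valuation. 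Writing that coefficient as $p^{v}\cdot(\text{unit})$ with $v<\lceil n_0\lambda\rceil$ and using that $\omega_{0,1}$ is a basis vector of $L$ (hence nonzero modulo $p$), the right-hand side is nonzero modulo $p^{\lceil n_0\lambda\rceil}$. Thus $V^{n_0+g-1}(\omega_{ij})$, and a fortiori $V^{n_0+g-1}L$, is not divisible by $p^{\lceil n_0\lambda\rceil}$, and the contrapositive of the ``only if'' direction of Theorem \ref{lambda} forces $NP_1(X/\mathbb{F}_Q)<\lambda$.

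The two reductions to Theorem \ref{keyT} are routine bookkeeping; the step requiring genuine care is the final one in part (ii), where nonvanishing of a single coefficient must be upgraded to honest nondivisibility of $V^{n_0+g-1}L$. This is precisely where one needs both that $\sigma^{-(n_0+g-1)}$ preserves $p$-adic valuations and that $\omega_{0,1}$ is a \emph{primitive} basis element, so that a unit multiple of it scaled by $p^{v}$ with $v<\lceil n_0\lambda\rceil$ cannot be absorbed into $p^{\lceil n_0\lambda\rceil}H_{dR}^1(X/W)$. (In matching the hypotheses one should also read the evident typographical slips in the statement, where each second argument $n+g-2$ should be $n_0+g-2$ and the congruence modulus should be $p^{\lceil n_0\lambda\rceil}$.)
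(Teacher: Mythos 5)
Your proposal is correct and follows essentially the same route as the paper: part (i) is read off Theorem \ref{keyT} applied for every $n_0$ together with Theorem \ref{lambda}, and part (ii) uses the ``Furthermore'' congruence of Theorem \ref{keyT} plus Theorem \ref{lambda}. The only difference is presentational — the paper phrases (ii) contrapositively (assuming $NP_1\geq\lambda$ and contradicting hypothesis (c)), whereas you argue directly and, usefully, make explicit the point the paper leaves implicit, namely that $\omega_{0,1}$ is a primitive basis element and $\sigma$ preserves valuations, so the low-valuation coefficient genuinely obstructs divisibility of $V^{n_0+g-1}L$.
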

	\begin{proof}
		(i) The hypothesis in Theorem \ref{keyT} are satisfied for all positive integers $n_0$ and for all possible $i,j$. Thus the statement follows from Theorem \ref{lambda}.\\
		(ii) If $NP_1(X/\mathbb{F}_Q) \geq \lambda$ then $p^{\left \lceil{n_0\lambda}\right \rceil} \mid V^{n_0+g-1}(\omega_{ij})$ for all $i,j$ in the range of Theorem \ref{basis} by Theorem \ref{lambda}. This implies that for the particular $i,j$ satisfying the hypothesis of Theorem \ref{keyT} we have  
		$$
		\text{ord}\big( C_{p^{n_0+g-1}-j}(i,n+g-2)\big) < \left \lceil{n_0\lambda}\right \rceil.
		$$
		This proves the Lemma.
	\end{proof}
	
	We remark that if there is an decreasing sequence $\lambda_i$ whose limit 
	is $\lambda$, and all members $\lambda_i$ satisfy the Key Lemma Part 2, and if 
	$\lambda$ satisfies the Key Lemma Part 1, then $NP_1(X/\mathbb{F}_Q)=\lambda$.
	We will use this in the proof of Theorem \ref{main-2}.
	
	\section{p-adic Behavior Coefficients of Power Series}
	
	\begin{lemma}\label{ya} Let $a > 0$ and let $y \in W[[z]]$ be a power series that satisfies $y^q-y=z$ and $y(0)=0$. Then 
		$$
		y^a \: = \quad \sum_{k_1=0}^{\infty}D_{k_1}(a)z^{k_1}
		$$
		where $D_{k_1}(a)=0$ if $k_1 \not \equiv 0 \mod q-1$; otherwise,
		$$
		D_{k_1}(a)=(-1)^{a+\frac{k_1-a}{q-1}}\frac{a\left(k_1+ \frac{k_1 -a}{q-1} -1 \right)!}{k_1! \left(\frac{k_1-a}{q-1}\right)!}.
		$$
	\end{lemma}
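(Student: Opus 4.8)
The statement is a closed form for the coefficients of $y^a$, where $y$ is the unique solution in $W[[z]]$ of $y^q-y=z$ with $y(0)=0$. Such a $y$ exists and is unique because $g(y):=y^q-y$ has $g(0)=0$ and $g'(0)=-1\in W^{\times}$, so $g$ has a compositional inverse over $W$; moreover $g\in\mathbb Z[[y]]$, so all the $D_{k_1}(a)$ will in fact be integers, even though the formula is written with factorials. My plan is to compute these coefficients directly by Lagrange inversion (equivalently, the residue form of the Lagrange--B\"urmann formula), since $y$ is precisely the inverse series of the trinomial $g$.

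Concretely, I would apply the inversion formula to $H(w)=w^a$. Writing $z=g(y)$ and using $y/g(y)=y/(y^q-y)=(y^{q-1}-1)^{-1}$, the Lagrange--B\"urmann formula gives
$$
D_{k_1}(a)=[z^{k_1}]\,y^a=\frac{1}{k_1}\,[y^{k_1-1}]\Bigl(H'(y)\,\bigl(y/g(y)\bigr)^{k_1}\Bigr)=\frac{a}{k_1}\,[y^{k_1-1}]\,y^{a-1}\bigl(y^{q-1}-1\bigr)^{-k_1}.
$$
I would then expand the last factor as a binomial series,
$$
\bigl(y^{q-1}-1\bigr)^{-k_1}=(-1)^{k_1}\bigl(1-y^{q-1}\bigr)^{-k_1}=(-1)^{k_1}\sum_{m\ge0}\binom{k_1+m-1}{m}y^{(q-1)m},
$$
so that $y^{a-1}(y^{q-1}-1)^{-k_1}=(-1)^{k_1}\sum_{m\ge0}\binom{k_1+m-1}{m}y^{\,a-1+(q-1)m}$. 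Extracting the coefficient of $y^{k_1-1}$ forces $a-1+(q-1)m=k_1-1$, i.e. $m=(k_1-a)/(q-1)$. This single exponent-matching step yields both halves of the lemma at once: unless $k_1\equiv a\pmod{q-1}$ (with $k_1\ge a$) there is no admissible $m$, so $D_{k_1}(a)=0$; otherwise exactly the term $m=(k_1-a)/(q-1)$ survives.

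In the surviving case I would substitute $m=(k_1-a)/(q-1)$ and simplify the binomial to factorials:
$$
D_{k_1}(a)=\frac{a}{k_1}\,(-1)^{k_1}\binom{k_1+m-1}{m}=(-1)^{k_1}\,\frac{a\,(k_1+m-1)!}{k_1!\,m!},
$$
which is the asserted factorial expression. The one step I would treat with the most care is the sign: the derivation produces the factor $(-1)^{k_1}$, and reconciling this with the stated exponent requires the parity relation $k_1=a+(q-1)m$, so I would recheck the sign bookkeeping explicitly (especially the interplay of the $(-1)^{k_1}$ from $(y^{q-1}-1)^{-k_1}$ with the parity of $(q-1)m$), since this is exactly where an off-by-a-sign slip is easiest to make. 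The remaining points are routine: justifying that the formal manipulations are legitimate in $W[[z]]$ (all series converge $z$-adically because the extracted coefficient is a finite sum), and noting that the a priori integrality of $y\in\mathbb Z[[z]]$ guarantees the rational expression $\tfrac{a}{k_1}\binom{k_1+m-1}{m}$ is in fact an integer, so no denominators survive.
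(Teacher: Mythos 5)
Your Lagrange--B\"urmann computation is correct and is, in substance, the standard argument here: the paper itself offers no proof of this lemma, merely citing Lemma 4.1 of Scholten--Zhu and asserting it goes through with $p$ replaced by $q$, and the inversion-plus-binomial-expansion computation you write out is exactly what that citation hides. One substantive remark: your instinct to recheck the sign bookkeeping is well placed, because your derivation gives $(-1)^{k_1}$, and with $m=(k_1-a)/(q-1)$ one has $(-1)^{k_1}=(-1)^{a+(q-1)m}$, which agrees with the printed sign $(-1)^{a+m}$ only when $(q-2)m$ is even --- always for $p=2$, but not for odd $p$ with $m$ odd. A direct check shows your sign is the correct one: for $q=3$, $a=1$ the series is $y=-z-z^3-3z^5-\cdots$, so $D_3(1)=-1=(-1)^{k_1}$, whereas the displayed formula yields $+1$. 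So the lemma as printed carries a sign typo for odd $p$ (harmless for the paper, which only ever uses $\ord_p(D_{k_1}(a))$ downstream), and similarly the vanishing condition should read $k_1\not\equiv a\pmod{q-1}$ rather than $k_1\not\equiv 0$. Your proof establishes the corrected statement; the remaining formal points you mention (uniqueness of $y$, integrality, $z$-adic convergence) are routine and handled adequately.
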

	\begin{proof}
		The proof of Lemma 4.1 in \cite{SE} stated for primes $p$ but is also valid for prime powers $q$.
	\end{proof}
	\begin{lemma}\label{dk} Let $a > 0$  and  $k_1 \equiv a$  mod $q-1$, write $a=i+l(q-1)$  with integers $l$ and $1 \leq i \leq q-1$, then
		$$ 
		\begin{cases}
		ord_p(D_{k_1}(a)) =\frac{s_p(k_1)-s_p(i-1)-1}{p-1}     &\text{if } l=0 \\
		ord_p(D_{k_1}(a)) \ge\frac{s_p(k_1)-s_p(i-1)-1}{p-1} - (l-1)u   &\text{if } l \geq 1.
		\end{cases}
		$$
	\end{lemma}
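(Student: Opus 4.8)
The plan is to feed the closed formula from Lemma \ref{ya} into Legendre's formula and reduce the whole question to base-$p$ digit sums. Set $t=\frac{k_1-a}{q-1}$, which is a nonnegative integer since $k_1\equiv a \bmod q-1$, so that Lemma \ref{ya} gives
$$
D_{k_1}(a)=(-1)^{a+t}\,\frac{a\,(k_1+t-1)!}{k_1!\,t!}.
$$
The sign is irrelevant for the valuation, so I would apply $\ord_p(n!)=\frac{n-s_p(n)}{p-1}$ to each factorial, together with the elementary identity $\ord_p(a)=\frac{1+s_p(a-1)-s_p(a)}{p-1}$ coming from $a!=a\,(a-1)!$. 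This expresses $\ord_p(D_{k_1}(a))$ purely in terms of $s_p(k_1)$, $s_p(t)$, $s_p(a)$, $s_p(a-1)$ and $s_p(k_1+t-1)$.

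The step that makes everything collapse is the identity
$$
k_1+t-1=a+t(q-1)+t-1=(a-1)+tq.
$$
Since $q=p^u$, in base $p$ the summand $tq$ occupies only the digit positions $\ge u$, so $s_p(tq)=s_p(t)$. Writing $c\ge 0$ for the number of carries in the base-$p$ addition $(a-1)+tq$, the carry rule $s_p(x+y)=s_p(x)+s_p(y)-(p-1)c$ gives $s_p(k_1+t-1)=s_p(a-1)+s_p(t)-(p-1)c$. Substituting this and the $\ord_p(a)$ identity into the Legendre expansion, every occurrence of $s_p(t)$ and $s_p(a-1)$ cancels and one is left with the master formula
$$
\ord_p(D_{k_1}(a))=\frac{s_p(k_1)-s_p(a)}{p-1}+c,\qquad c\ge 0.
$$

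Both cases now drop out. When $l=0$ we have $a=i$ with $1\le i\le q-1$, so $a-1=i-1<q$; then $a-1$ and $tq$ have disjoint digit ranges, no carries occur ($c=0$), and the master formula gives the exact value $\frac{s_p(k_1)-s_p(i)}{p-1}$, which coincides with the displayed expression $\frac{s_p(k_1)-s_p(i-1)-1}{p-1}$ exactly when $s_p(i)=s_p(i-1)+1$, i.e.\ when $p\nmid i$. When $l\ge 1$ I would simply discard the nonnegative term $c$ and bound $s_p(a)$ from above: splitting $a=(i+q-1)+(l-1)(q-1)$ and using subadditivity of the digit sum, together with $s_p(i+q-1)=s_p(i-1)+1$ (a disjoint-digit computation, as $i-1<q$) and $s_p(q-1)=u(p-1)$, yields
$$
s_p(a)\le s_p(i-1)+1+(l-1)\,u(p-1),
$$
which is precisely what turns $\frac{s_p(k_1)-s_p(a)}{p-1}$ into the stated lower bound $\frac{s_p(k_1)-s_p(i-1)-1}{p-1}-(l-1)u$.

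The main obstacle, and the only place any care is needed, is the base-$p$ carry bookkeeping: checking that the digit ranges of $a-1$ and $tq$ are disjoint exactly when $l=0$, and proving the subadditivity estimate $s_p\big((l-1)(q-1)\big)\le (l-1)\,u(p-1)$ for $l\ge 1$. Once the master formula is in hand, everything else is a mechanical substitution into Legendre's formula.
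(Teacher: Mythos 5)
Your argument is essentially the paper's own: both start from the closed form of Lemma \ref{ya}, apply $\ord_p(n!)=\frac{n-s_p(n)}{p-1}$, use the identity $k_1+\frac{k_1-a}{q-1}-1=(a-1)+\frac{k_1-a}{q-1}q$ to reach $\ord_p(D_{k_1}(a))=\ord_p(a)+\frac{1}{p-1}\bigl(s_p(k_1)+s_p(t)-1-s_p((a-1)+tq)\bigr)$ with $t=\frac{k_1-a}{q-1}$, and finish the case $l\ge 1$ by digit-sum subadditivity via $s_p\bigl((l-1)(q-1)\bigr)\le (l-1)u(p-1)$ and $s_p(i+q-1)=s_p(i-1)+1$; your ``master formula'' $\frac{s_p(k_1)-s_p(a)}{p-1}+c$ is just a cleaner repackaging that treats $l=1$ and $l>1$ uniformly, and your $l\ge1$ bound comes out exactly as stated. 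The one substantive point is your remark on $l=0$: you are right that the exact value there is $\frac{s_p(k_1)-s_p(i)}{p-1}$, which agrees with the displayed $\frac{s_p(k_1)-s_p(i-1)-1}{p-1}$ only when $p\nmid i$ (for instance $q=4$, $i=a=2$, $k_1=2$ gives $D_{k_1}(a)=1$ of valuation $0$, whereas the displayed formula would give $-1$). The paper's proof reaches $\ord_p(i)+\frac{s_p(k_1)-s_p(i-1)-1}{p-1}$ in that case and silently drops the $\ord_p(i)$ term, so the $l=0$ equality should be read as ``$\ge$'' (or restricted to $p\nmid i$); your version is the correct sharpening, not a gap in your argument.
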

	\begin{proof}
		$k_1 \equiv a$ mod $q-1$. Using the identity $(p-1)ord_p(k!)=k-s_p(k)$ for all positive integers $k$ we have 
		$$
		ord_p(D_{k_1}(a)) = ord_p(a)+ \frac{1}{p-1} \bigg(s_p(k_1)+s_p\bigg(\frac{k_1-a}{q-1}\bigg)-1-s_p\bigg(a-1+\frac{k_1-a}{q-1}q\bigg)\bigg).
		$$
		If $l=0$, then (note that $a=i$)
		$$
		s_p\bigg(a-1+\frac{k_1-a}{q-1}q\bigg)=s_p(a-1)+s_p\bigg(\frac{k_1-a}{q-1}\bigg).
		$$
		If $l=1$, then
		\begin{equation*} \label{}
		\begin{split}
		s_p\bigg(a-1+\frac{k_1-a}{q-1}q\bigg) &\leq s_p(a-1)+s_p\bigg(\frac{k_1-a}{q-1}\bigg)\\
		&=(p-1)ord_p(a)+(1+s_p(i-1))-1+s_p\bigg(\frac{k_1-a}{q-1}\bigg)\\
		&=(p-1)ord_p(a)+s_p(i-1)+s_p\bigg(\frac{k_1-a}{q-1}\bigg).
		\end{split}
		\end{equation*}
		If $l > 1$, then
		\begin{equation*} \label{}
		\begin{split}
		s_p\bigg(a-1+\frac{k_1-a}{q-1}q\bigg) &= 	s_p\bigg(i-1+l(q-1)+\frac{k_1-a}{q-1}q\bigg)\\
		&\leq s_p(i-1)+s_p(l(q-1))+s_p\bigg(\frac{k_1-a}{q-1}q\bigg)\\
		&\leq s_p(i-1)+(l-1)(p-1)u+s_p\bigg(\frac{k_1-a}{q-1}q\bigg).
		\end{split}
		\end{equation*}
	\end{proof}
	
	Fix two integers $N > 0$ and $0 \leq i \leq q-1$. Let $y \in W[[z]]$ be a power series that satisfies $y^q-y=z$ and $y(0)=0$. Define coefficients $E_{k_1}(i,N)$ by
	$$
	y^i(qy^{q-1}-1)^{p^N-1}= \sum_{k_1=0}^{\infty}E_{k_1}(i,N)z^{k_1}.
	$$
	Let $z=f(x)=a_1x+a_2x^2+\cdots+a_dx^d$. For ease of formulation, set $0^0 := 1$. Then 
	
	\begin{align*}
	\displaystyle\sum_{m=0}^\infty E_m(i,N) f(x)^m &=\displaystyle\sum_{m=0}^\infty E_m(i,N)  \left(a_1x+a_2x^2+\cdots+a_dx^d\right)^m\\ &=\displaystyle\sum_{m=0}^\infty E_m(i,N)  \displaystyle\sum_{\tiny\begin{matrix}m_1,m_2,\cdots m_d \geq 0\\m_1+m_2+\cdots+m_d=m\end{matrix}}\normalfont\left(\begin{matrix}
	m\\m_1,m_2,\cdots,m_d\end{matrix}\right)\displaystyle\prod_{l=1}^{d}\left(a_lx^l\right)^{m_l}.
	\end{align*}
	In order to find the coefficient of $x^r$ of $\displaystyle\sum_{m=0}^\infty E_m(i,N)  f(x)^m$, we have to find all $m_l$'s such that  $$\displaystyle\sum_{l=0}^dlm_l=r.$$
	
	Write $$m_i=k_i-k_{i+1} \text{ for } i=1,2,\cdots,d-1 \text{ and } m_d=k_d.$$ Since $m_i\ge 0$ for each $i=1,2,\cdots,d$, there is a one-to-one correspondence between $$(m_1,m_2,\cdots,m_d) \text{ such that } m_1+m_2+\cdots+m_d=m$$ and $$(k_1,k_2,\cdots,k_d) \text { such that } k_1 \ge k_2 \ge \cdots \ge k_d \ge 0 \text{ and } k_1+k_2+\cdots+k_d=r.$$ Moreover, we have $$m=\displaystyle\sum_{i=0}^dm_i=\displaystyle\sum_{j=0}^d(k_j-k_{j+1})=k_1$$ and $$\left(\begin{matrix}
	m\\m_1,m_2,\cdots,m_d\end{matrix}\right)=\left(\begin{matrix}
	k_1\\k_1-k_2,k_2-k_3,\cdots,k_{d-1}-k_d,k_d\end{matrix}\right)
	=\displaystyle\prod_{l=1}^{d-1}\left(\begin{matrix}
	k_l\\k_{l+1}\end{matrix}\right).$$
	For integers $r \geq 0$ let $\mathbf{K}_r$ denote the set of transposes $\mathbf{k}= \ ^{t}( k_1,k_2,...,k_d)$ of $d$-tuple integers with $k_1 \geq k_2 \geq ... \geq k_d \geq 0 $ and $ \sum_{l=1}^{d} k_l=r$. Moreover define $k_{d+1}=0$.
	
	Hence the $x^r$ of  $\displaystyle\sum_{m=0}^\infty E_m(i,N)  f(x)^m$ is $$\displaystyle\sum_{\mathbf{k} \in \mathbf{K}_r} E_{k_1}(i,N)\displaystyle\prod_{l=1}^{d}\left(\begin{matrix}
	k_l\\k_{l+1}\end{matrix}\right)a_l^{k_l-k_{l+1}}.$$
	
	Since $$\displaystyle\sum_{m=0}^\infty C_r(i,N)x^r=\displaystyle\sum_{m=0}^\infty E_m(i,N) f(x)^m,$$ we have $$C_r(i,N)=\displaystyle\sum_{\mathbf{k} \in \mathbf{K}_r} E_{k_1}(i,N)\displaystyle\prod_{l=1}^{d}\left(\begin{matrix}
	k_l\\k_{l+1}\end{matrix}\right)a_l^{k_l-k_{l+1}}.$$
	We define 
	$$
	s_p(\mathbf{k}) \: = \: s_p(k_1-k_2)+s_p(k_2-k_3)+...+s_p(k_{d-1}-k_d)+s_p(d).
	$$
	where	$\mathbf{k}= \ ^{t}( k_1,k_2,...,k_d) \in \mathbf{K}_r$ for some $r>0$.
	\begin{lemma}\label{ek} Let $\mathbf{k}= \ ^{t}( k_1,k_2,...,k_d) \in \mathbf{K}_r$. If $k_1 \not \equiv 0 \mod q-1$ then $E_{k_1}(i,N)=0$. If $k_1 \equiv$ mod $q-1$ then (define s(-1) := -1)
		\begin{equation*} 
		\begin{split}
		ord_p(E_{k_1}(i,N)) \quad &= \quad \frac{s_p(k_1)-s_p(i-1)-1}{p-1},\\
		ord_p\left(E_{k_1}(i,N)\prod_{l=1}^{d-1} \binom{k_l}{k_{l+1}}\right) \quad &= \quad \frac{s_p(\mathbf{k})-s_p(i-1)-1}{p-1}.
		\end{split}
		\end{equation*}
	\end{lemma}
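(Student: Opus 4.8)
The plan is to reduce the computation of $E_{k_1}(i,N)$ to the already-understood coefficients $D_{k_1}(a)$ of powers of $y$ supplied by Lemma \ref{ya} and Lemma \ref{dk}, and then extract the exact valuation by an ultrametric domination argument. First I would expand the inner factor by the binomial theorem,
\begin{equation*}
y^i(qy^{q-1}-1)^{p^N-1}=\sum_{j=0}^{p^N-1}\binom{p^N-1}{j}(-1)^{p^N-1-j}q^j\,y^{\,i+j(q-1)},
\end{equation*}
and substitute $y^{a}=\sum_{k_1}D_{k_1}(a)z^{k_1}$ for each exponent $a=i+j(q-1)$. Comparing coefficients of $z^{k_1}$ then gives the closed form
\begin{equation*}
E_{k_1}(i,N)=\sum_{j=0}^{p^N-1}\binom{p^N-1}{j}(-1)^{p^N-1-j}q^j\,D_{k_1}\!\left(i+j(q-1)\right).
\end{equation*}
Since $D_{k_1}(a)=0$ unless $k_1\equiv a\pmod{q-1}$ and every exponent $a=i+j(q-1)$ is congruent to $i$ modulo $q-1$, each summand vanishes unless $k_1$ lies in the single residue class $i\pmod{q-1}$; this yields the stated vanishing criterion.

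The crux is the valuation estimate. The $j=0$ term is $(-1)^{p^N-1}D_{k_1}(i)$, whose valuation is exactly $\frac{s_p(k_1)-s_p(i-1)-1}{p-1}$ by the $l=0$ case of Lemma \ref{dk}. For $j\ge 1$ I would put $i+j(q-1)$ into the normal form of Lemma \ref{dk} (so that the integer there equals $j$) and apply the $l\ge 1$ bound
\begin{equation*}
\ord_p\!\left(D_{k_1}(i+j(q-1))\right)\ \ge\ \frac{s_p(k_1)-s_p(i-1)-1}{p-1}-(j-1)u.
\end{equation*}
Multiplying by $q^j=p^{uj}$ and noting that $\binom{p^N-1}{j}$ is a $p$-adic integer, the $j$-th summand has valuation at least
\begin{equation*}
uj+\frac{s_p(k_1)-s_p(i-1)-1}{p-1}-(j-1)u=\frac{s_p(k_1)-s_p(i-1)-1}{p-1}+u,
\end{equation*}
which is strictly larger than the valuation of the $j=0$ term because $u\ge 1$. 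Hence the $j=0$ term strictly dominates and $\ord_p(E_{k_1}(i,N))=\frac{s_p(k_1)-s_p(i-1)-1}{p-1}$, proving the first displayed equality. The cancellation $uj-(j-1)u=u>0$ is the entire point, and making this strict domination airtight (along with the boundary value $i=0$, absorbed by the convention $s_p(-1)=-1$, and checking that $D_{k_1}(i)$ is genuinely nonzero in the relevant range so the exact valuation is attained) is the main obstacle.

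It then remains to fold in the valuation of the binomial product. By Legendre's formula $(p-1)\ord_p(k!)=k-s_p(k)$, equivalently Kummer's theorem, one has $\ord_p\binom{k_l}{k_{l+1}}=\frac{s_p(k_{l+1})+s_p(k_l-k_{l+1})-s_p(k_l)}{p-1}$. Summing over $l=1,\dots,d-1$, the contributions $s_p(k_{l+1})-s_p(k_l)$ telescope to $s_p(k_d)-s_p(k_1)$, leaving
\begin{equation*}
\ord_p\!\left(\prod_{l=1}^{d-1}\binom{k_l}{k_{l+1}}\right)=\frac{1}{p-1}\left(\sum_{l=1}^{d-1}s_p(k_l-k_{l+1})+s_p(k_d)-s_p(k_1)\right)=\frac{s_p(\mathbf{k})-s_p(k_1)}{p-1}.
\end{equation*}
Adding this to the first formula and cancelling the $s_p(k_1)$ terms yields $\ord_p\bigl(E_{k_1}(i,N)\prod_{l=1}^{d-1}\binom{k_l}{k_{l+1}}\bigr)=\frac{s_p(\mathbf{k})-s_p(i-1)-1}{p-1}$, which is the second assertion. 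This last step is purely formal once the first formula is in hand, so all the real work is concentrated in the domination argument of the second paragraph.
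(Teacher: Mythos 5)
Your proposal follows the same route as the paper's proof: expand $(qy^{q-1}-1)^{p^N-1}$ by the binomial theorem, obtain $E_{k_1}(i,N)=\sum_{j=0}^{p^N-1}(-1)^{p^N-1-j}\binom{p^N-1}{j}p^{uj}D_{k_1}(i+j(q-1))$, identify the summand of strictly minimal valuation via Lemma \ref{dk}, and finish with the telescoping Legendre computation for $\prod_{l}\binom{k_l}{k_{l+1}}$ (that last step is verbatim the paper's). For $1\le i\le q-1$ your domination argument is exactly right and is what the paper does: the $j=0$ term has exact valuation $\frac{s_p(k_1)-s_p(i-1)-1}{p-1}$, and each $j\ge 1$ term exceeds it by at least $u$ because the factor $p^{uj}$ overcompensates the loss of $(j-1)u$ allowed by Lemma \ref{dk}.

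The one genuine loose end is $i=0$, which you flag but misdiagnose. The convention $s_p(-1)=-1$ fixes the \emph{statement} of the lemma, but it does not rescue your argument: for $i=0$ and $k_1>0$ the $j=0$ summand is $D_{k_1}(0)=0$ (and Lemma \ref{dk} does not apply to $a=0$ at all), so there is no $j=0$ term to dominate. The correct repair, which is what the paper does, is to shift the normal form: write $j(q-1)=(q-1)+(j-1)(q-1)$, so Lemma \ref{dk} applies with $i'=q-1$ and $l=j-1$. The minimal-valuation term is then $j=1$, with exact valuation $u+\ord_p(D_{k_1}(q-1))=u+\frac{s_p(k_1)-s_p(q-2)-1}{p-1}=\frac{s_p(k_1)}{p-1}$, and the $j\ge 2$ terms exceed this by at least $u$ by the same cancellation as before; this matches $\frac{s_p(k_1)-s_p(-1)-1}{p-1}$ under the stated convention. (The separate trivial case $k_1=i=0$, where $E_0(0,N)=\pm 1$, should also be recorded.) So your proof needs an explicit case split on $i=0$ versus $i\ge 1$; as written, the second paragraph asserts something false in the first case.
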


	\begin{proof}
		Take the identity 
		$$
		y^i(qy^{q-1}-1)^{p^N-1}= \sum_{l=0}^{p^N-1}(-1)^{p^n-1-l}\binom{p^N-1}{l}q^ly^{i+l(q-1)}. 
		$$
		Subsitute the power series expansion of $y^{i+l(q-1)}$ above, we get
		$$
		E_{k_1}(i,N)= \sum_{l=0}^{p^N-1}(-1)^{p^n-1-l}\binom{p^N-1}{l}D_{k_1}(i+l(q-1))p^{ul}.
		$$
		$k_1 \not \equiv 0 \mod q-1$, then $D_{k_1}(i+l(q-1))=0$ by Lemma \ref{dk}, hence $E_{k_1}(i,N)=0$.\\
		If $k_1=i=0$, then $E_{k_1}(i,N)=(-1)^{p^N-1}$, hence $ord_p(E_{k_1}(i,N))=0$.\\
		If $i=0$ and $k_1 > 0$ and $k_1 \equiv$ mod $q-1$, by Lemma \ref{dk}, the term of minimal valuation occurs at $l=1$, we have
		$$
		ord_p(E_{k_1}(i,N))= u+ord_p(D_{k_1}(p-1))=u+\frac{s_p(k_1)-u(p-1)}{p-1}=\frac{s_p(k_1)}{p-1}.
		$$
		If $i >0$ and $k_1 \equiv$ mod $q-1$, by Lemma \ref{dk}, the term of minimal valuation occurs at $l=0$, we have
		$$
		ord_p(E_{k_1}(i,N))= u+ord_p(D_{k_1}(i))=\quad \frac{s_p(k_1)-s_p(i-1)-1}{p-1}.
		$$
		Moreover,
		$$
		ord_p \bigg(\prod_{l=1}^{d-1} \binom{k_l}{k_{l+1}}\bigg)=\sum_{l=1}^{d-1}ord_p\bigg(\binom{k_l}{k_{l+1}}\bigg)=\sum_{l=1}^{d-1}\frac{s_p(k_l)-s_p(k_{l+1})+s_p(k_l-k_{l+1})}{p-1}=\frac{s_p(\mathbf{k})-s_p(k_1)}{p-1}.
		$$
	\end{proof}
	
	\section{p-adic Boxes and r-tiling Sequences}\label{rtiling}
	
	Let $\mathbf{k}= \ ^{t}( k_1,k_2,...,k_d) \in \mathbf{K}_r$. We define integers $k_{l,v}$ as follows: For $l=d$ we let $k_d=\sum_{v \geq 0}k_{d,v}p^v$ be the $p$-ary expansion of $k_d$. For $1 \leq l \leq d$ we define $k_{l,v}$ inductively by
	$$
	k_{l,v} := k_{l+1,v}+p^v-\text{coefficient in the $p$-ary expansion of } (k_l-k_{l+1}),
	$$
	for all $v \geq 0$. We call the representation  $^{t}( k_1,k_2,...,k_d)$ the p-adic box of $\mathbf{k}$, denoted by $\boxed{\mathbf{k}}$ for short:
	$$
	\boxed{\mathbf{k}}=
	\begin{bmatrix}
	\cdots & k_{1,2} & k_{1,1} & k_{1,0} \\
	\cdots & k_{2,2} & k_{2,1} & k_{2,0} \\
	&   & \vdots &   \\
	\cdots & k_{d,2} & k_{d,1} & k_{d,0}
	\end{bmatrix}.
	$$
	
	Let $S$ be a finite set of positive integers. For any positive integers $r$, an r-tiling sequence (of length $v$) is a sequence of integer 3-tuples $\{[a_i,b_i,l_i] \}_{i=1}^v$ such that\\
	1) $l_i \in S$, $0 \leq b_i \leq b_{i+1}$, $1 \leq a_i \leq p-1$;\\
	2) $l_i > l_{i+1}$ if $b_i=b_{i+1}$;\\
	3) $\sum_{i=1}^{v}a_il_ip^{b_i}=r$.
	
	If no such sequence exists we set $\tilde{s}_p(r,S):=\infty$; otherwise, define  $\tilde{s}_p(r,S)$ the length of the shortest $r$-tiling sequence to be $\sum a_i$. Let $\tilde{K}(r,S)$ denote set of all shortest $r$-tiling sequences. 
	
	\begin{lemma}\label{tiling} For any positive integer $r$ and a finite set $S$ of positive integers,
		\begin{enumerate}
			\item if $\mathbf{k} \in \mathbf{K}_r$ with $k_l=k_{l+1}$ for all $l \not \in S$, then $s_p(\mathbf{k}) \geq \tilde{s}_p(r,S)$,
			\item  there is a bijection between the set $\tilde{K}(r,S)$ and the set
			$$
			\{\mathbf{k} \in \mathbf{K_r} | s_p(\mathbf{k}) =\tilde{s}_p(r,S) \: \text{and} \: k_l=k_{l+1} \: \text{for all} \: l \notin S \}.
			$$
		\end{enumerate}
		\end{lemma}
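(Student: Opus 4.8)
The plan is to exhibit a weight-preserving bijection between the $r$-tiling sequences and the tuples $\mathbf{k}\in\mathbf{K}_r$ satisfying $k_l=k_{l+1}$ for all $l\notin S$; once this is set up, both parts fall out at once. The starting observation is that such a $\mathbf{k}$ is determined by its successive differences $\delta_l:=k_l-k_{l+1}$ for $1\le l\le d$ (with $k_{d+1}=0$): these are nonnegative because $k_1\ge\cdots\ge k_d\ge 0$, they vanish for $l\notin S$ by hypothesis, and $\mathbf{k}$ is recovered from them via $k_l=\sum_{j\ge l}\delta_j$. A short telescoping computation gives $\sum_{l=1}^d l\,\delta_l=\sum_{l=1}^d k_l=r$, and by the definition of $s_p(\mathbf{k})$ we have $s_p(\mathbf{k})=\sum_{l=1}^d s_p(\delta_l)$. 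In the language of the $p$-adic box, the $\delta_l$ are exactly the successive row-differences, and $s_p(\mathbf{k})$ is the sum of the entries of the top row.

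First I would define the forward map. Expanding each $\delta_l$ in base $p$, say $\delta_l=\sum_{v\ge 0}a_{l,v}p^v$ with $a_{l,v}\in\{0,\dots,p-1\}$, I attach to every nonzero digit the triple $[a_{l,v},v,l]$ and sort the resulting triples by $b=v$ increasing and, within a fixed $v$, by $l$ decreasing. I would then check that this is a genuine $r$-tiling sequence: condition (1) holds since $l\in S$ and $1\le a_{l,v}\le p-1$, condition (2) holds because the sort makes $l$ strictly decrease inside each block of constant $b$ (the pairs $(v,l)$ being pairwise distinct), and condition (3) reads $\sum_i a_il_ip^{b_i}=\sum_{l,v}a_{l,v}\,l\,p^v=\sum_l l\,\delta_l=r$. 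Its weight is $\sum_i a_i=\sum_{l,v}a_{l,v}=\sum_l s_p(\delta_l)=s_p(\mathbf{k})$. Since $\tilde s_p(r,S)$ is by definition the least weight of an $r$-tiling sequence, this already proves part 1, namely $s_p(\mathbf{k})\ge\tilde s_p(r,S)$ (the inequality being vacuous precisely when no such $\mathbf{k}$, equivalently no $r$-tiling sequence, exists).

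For part 2 I would construct the inverse. Given any $r$-tiling sequence, conditions (1) and (2) force the pairs $(b_i,l_i)$ to be pairwise distinct, so putting $a_{l,v}:=a_i$ when $(v,l)=(b_i,l_i)$ and $a_{l,v}:=0$ otherwise gives, for each fixed $l\in S$, a valid base-$p$ expansion $\delta_l:=\sum_v a_{l,v}p^v$ (digits in $\{0,\dots,p-1\}$, exponents distinct); I set $\delta_l:=0$ for $l\notin S$ and $k_l:=\sum_{j\ge l}\delta_j$. A direct check shows $\mathbf{k}\in\mathbf{K}_r$ with $k_l=k_{l+1}$ off $S$, that this is a two-sided inverse of the forward map, and again that $s_p(\mathbf{k})=\sum_i a_i$. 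Restricting this weight-preserving bijection to the sequences of minimal weight $\tilde s_p(r,S)$, i.e.\ to $\tilde K(r,S)$, identifies them exactly with $\{\mathbf{k}\in\mathbf{K}_r:\ s_p(\mathbf{k})=\tilde s_p(r,S),\ k_l=k_{l+1}\text{ for }l\notin S\}$, which is part 2.

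All the verifications here --- the telescoping identity for $r$, the ordering conditions, and the digit-validity of the inverse --- are elementary. The only conceptual point, and the step I would be most careful about, is recognizing that the difference/box data of $\mathbf{k}$ carries exactly the same information as a base-$p$ digit pattern supported on $S$, so that the canonical sort yields a genuine bijection and not merely a surjection; a minor technicality to dispatch is that elements of $S$ exceeding $d$ are irrelevant, so one works with $S\cap\{1,\dots,d\}$ throughout.
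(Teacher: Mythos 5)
Your proof is correct and takes essentially the same route as the paper's: both set up the weight-preserving correspondence between $r$-tiling sequences and the tuples $\mathbf{k}$ by reading off the base-$p$ digits of the successive differences $k_l-k_{l+1}$ (equivalently, the entries of the $p$-adic box), with the ordering forced by the conditions on $(b_i,l_i)$. Your write-up is merely more explicit than the paper's about the two-sided inverse, the weight computation $s_p(\mathbf{k})=\sum_i a_i$, and the deduction of part 1 from minimality.
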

	\begin{proof}
		We shall define the maps first. An $r$-tiling sequence $\{[a_i,b_i,l_i] \}_{i=1}^{\tilde{s}_p(r,S)} \in \tilde{K}(r,S)$ is sent to the element $\mathbf{k} \in \mathbf{K}$ whose $p$-adic box $\boxed{\mathbf{k}}$ has $k_{l,v}\: = \sum_{i \in U} a_i$ where $U=\{j | v=b_j \: \text{and} \: l \leq l_j \}$.\\
	Given $\mathbf{k} \in \mathbf{K}$ with $k_l=k_{l+1}$ for all $l \not \in S$, one defines $\{[a_i,b_i,l_i] \}_{i=1}^{s(\mathbf{k})}$ as follows: Given $b$ let $l$ be largest value such that $k_{l,b}=a$ is nonzero.  Then we get a 3-tuple $[a,b,l]$. Subtract $a$ from each component $k_{l',b}$ with $1 \leq l' \leq l$, then apply the same procedure if there is a nonzero element. Note that: $l$ is in $S$ by definition, since it only change when $l \in S$; and $a \in \{1,...,p-1 \}$ since $a$ is $p^v$-th coefficient in the "base $p$" expansion of $(k_l-k_{l+1})$.\\
		These the maps are well-defined and one-to-one. Since the sets are finite, the maps are bijective.
	\end{proof}

		\section{Proof of Theorem \ref{weight-slope-thm} }\label{pfthm1}

	\begin{proof}[Proof of Theorem \ref{weight-slope-thm}:]
		Recall
		$$
		C_r(i,N)=\sum_{k \in K_r} E_{k_1}(i,N) \prod_{l=1}^{d-1} \binom{k_l}{k_{l+1}} \prod_{l=1}^{d-1} a_l^{k_l-k_{l+1}}.
		$$
		Note that $\prod\limits_{l=1}^{d-1}a_l^{k_l-k_{l+1}}=0$ if $k_l>k_{l+1}$ for some $l \not \in s(X)$ and if $k_l=k_{l+1}$ for all $l \not \in s(X)$ then  $s_p(\mathbf k) \geq \tilde s_p(r,s(X))$ where $\mathbf k \in \mathbf K_r$ by Lemma \ref{tiling} Part 1. Therefore, for $r$ in $\{mp^{n+g-1}-j |m,n \geq 1, \text{ and $j$ within range} \}$, we have
		$$
		ord_p(C_r(i,N)) \geq  \frac{\tilde s_p(r,s(X))-s_p(i-1)-1}{p-1}.$$
		
		Moreover, for $r=mp^{n+g-1}-j$ consider any $r$-tiling sequence given by $mp^{n+g-1}-j=\sum_{i =0}^{v}a_il_ip^{b_i}$, then we have
		$$
		\sigma\tilde{s}_p(r,s(X))=\sum_{i =0}^{v}\sigma a_i  \geq \sum_{i =0}^{v}s_p(a_il_i) = \sum_{i =0}^{v}s_p(a_il_ip^{b_i}) \geq s_p(\sum_{i =0}^{v}a_il_ip^{b_i})=s_p(r).
		$$
		
		Let $p$ be an odd prime and $d>2$ be a positive integer. Note that the case $d=2$ is easy, so we can restrict it as $d>2$. We have $ 0\le i \le p-2$ and $1 \le j <d(p-1)-1$. Therefore, $$\tilde{s}_p(r,X) \ge \frac1\sigma\left[(p-1)\left(n+g-1-\sigma\right)+1\right]$$ and hence \begin{align*}
		C_r(i,N) &\ge \frac{\tilde{s}_p(r,X) -s_p(i)}{p-1}\\
		&\ge \dfrac{\frac1\sigma\left[(p-1)\left(n+g-1-\sigma\right)+1\right]-(p-2)}{p-1}\\ & =\frac{n}{\sigma}+\frac{g+2-p-\sigma}{\sigma}\\
		&\ge \frac n{\sigma}.\end{align*}
		
		Let $q=p^u$ be a prime power with $p$ prime, $u\ge2$ integer and $d$ positive integer. We have $ 0\le i \le p^u-2$ and $1 \le j <d(q-1)-1$. Therefore, $$\tilde{s}_p(r,X) \ge \frac1\sigma(p-1)\left(n+g-1-\sigma\right)$$ and hence \begin{align*}
		C_r(i,N) &\ge \frac{\tilde{s}_p(r,X) -s_p(i-1)-1}{p-1}\\
		&\ge \dfrac{\frac1\sigma(p-1)\left(n+g-1-\sigma\right)-u(p-1)}{p-1}\\ 
		& =\frac{n}{\sigma}+\frac{g-1-\sigma-u}{\sigma}\\
		&\ge\frac n{\sigma}\end{align*}
		except for $(q,d) \ne(4,3)$. This easy case is also fine when we specially optimize the upper bounds of $s_p(i)$ and $s_p(j)$. We omit the details.
	\end{proof}

		\section{Proof of Theorem  \ref{main-2}}\label{pfthm3}
	In this section, we will prove Lemma \ref{kbox} and Lemma \ref{C-mod-p^k+!}  and then prove Theorem \ref{main-2}. Let $d=j(p^h-1)$ with $h \geq 1$ and $1\le j \le p-1$.

	\begin{lemma} \label{kbox}
		Let $\mathbf k \in \mathbf K_r$ with $r \geq 1$.
		\begin{enumerate}
			\item We have $s_p(\mathbf k) \geq \left\lceil \frac{s_p(r)}{h(p-1)} \right\rceil$.
			\item  If $s_p(\mathbf k)= \left\lceil \frac{s_p(r)}{h(p-1)} \right\rceil$ then 
			\begin{enumerate}
				\item the p-adic boxes $\boxed{\mathbf k}$ consists of only $0$ or $1$; or $2$ when $\frac{p^2-1}2 \leq d < p^2-1$.
				\item 	for every $v \geq 0$, $s_p(\sum_{l=1}^{d}k_{l,v})=k_{1,v}h(p-1).$
				\item  For any $r=j(p^h-1)$ with $1 \le j \le p-1$ and $h \ge 1$, the p-adic boxes $\boxed{\mathbf k}$ consists of only $0$ or $1$.
			\end{enumerate}
		\end{enumerate} 
		
	\end{lemma}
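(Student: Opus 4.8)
The plan is to read everything off the $p$-adic box $\boxed{\mathbf k}$ column by column, reducing both parts to two identities. Write $S_v := \sum_{l=1}^{d} k_{l,v}$ for the $v$-th column sum. On the one hand, the definition of the box gives $r=\sum_{l=1}^d k_l=\sum_{v\ge 0}S_v p^v$. On the other hand, since $k_{l,v}-k_{l+1,v}$ is the $p^v$-digit of $k_l-k_{l+1}$, summing over $v$ and telescoping over $l$ yields
$$s_p(\mathbf k)=\sum_{l=1}^{d-1}s_p(k_l-k_{l+1})+s_p(k_d)=\sum_{v\ge 0}k_{1,v}.$$
Thus $s_p(\mathbf k)$ is exactly the sum of the top-row entries of the box. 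I would record these two identities first, since they drive both parts.

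For Part 1 the key digit-sum input is that the maximal base-$p$ digit sum on the interval $[1,d]$ with $d=j(p^h-1)$ equals $h(p-1)$; I would establish this sharp constant as a short lemma. Granting it, subadditivity of $s_p$ under addition gives $s_p(r)=s_p\bigl(\sum_v S_v p^v\bigr)\le \sum_v s_p(S_v)$. For a single column I would use a layer-cake decomposition: since $k_{1,v}\ge k_{2,v}\ge\cdots\ge k_{d,v}\ge 0$, we have $S_v=\sum_{t=1}^{k_{1,v}}\bigl|\{l:k_{l,v}\ge t\}\bigr|$, a sum of $k_{1,v}$ integers each lying in $[1,d]$, whence $s_p(S_v)\le h(p-1)\,k_{1,v}$. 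Summing over $v$ and using the identity above gives $s_p(r)\le h(p-1)\sum_v k_{1,v}=h(p-1)\,s_p(\mathbf k)$, and dividing (then rounding up, as $s_p(\mathbf k)$ is an integer) yields the bound. The tiling reformulation of Lemma \ref{tiling} provides an equivalent route via $s_p(\mathbf k)\ge\tilde s_p(r,\{1,\dots,d\})$ and $s_p(r)\le\sum_i a_i\,s_p(l_i)$, which I would keep in reserve for the equality analysis.

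For Part 2 I would trace the equality case through exactly this chain. If $s_p(\mathbf k)=\lceil s_p(r)/(h(p-1))\rceil$, then both the subadditive step and every per-column step $s_p(S_v)\le h(p-1)k_{1,v}$ must be tight; tightness of the latter is precisely (b), and tightness of the former says no carries occur when the $S_vp^v$ are added. Equality in the per-column bound, together with the non-increasing column structure and $d=j(p^h-1)$, pins the shape of each column: a column can only attain the extremal digit sum $k_{1,v}h(p-1)$ if its entries are as small as possible, forcing them into $\{0,1\}$ outside the single band $\tfrac{p^2-1}{2}\le d<p^2-1$, where a $2$ becomes admissible; this is (a). Statement (c) is then the clean corner: for $r=j(p^h-1)$ one has $s_p(r)=h(p-1)$, so the minimum is $\lceil s_p(r)/(h(p-1))\rceil=1$; but $s_p(\mathbf k)=\sum_v k_{1,v}=1$ means exactly one top-row entry equals $1$ and the rest vanish, and since each column is dominated by its top entry, the whole box lies in $\{0,1\}$.

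The step I expect to be the main obstacle is the equality analysis for (a): ruling out entries $\ge 2$ (respectively $\ge 3$) amounts to showing that any column carrying a large entry is strictly wasteful against the digit-sum budget $k_{1,v}h(p-1)$, and the extra arithmetic needed to isolate precisely the exceptional band $\tfrac{p^2-1}{2}\le d<p^2-1$ is delicate. Here I would lean on the bijection of Lemma \ref{tiling}(2), which replaces "minimal boxes" by "shortest $r$-tiling sequences" and lets me argue about the blocks $[a_i,b_i,l_i]$ directly, so that the no-carry and small-entry conditions become statements about the $a_i$ and $l_i$.
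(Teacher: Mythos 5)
Your two structural identities ($r=\sum_{v}S_vp^v$ with $S_v=\sum_{l=1}^{d}k_{l,v}$, and $s_p(\mathbf k)=\sum_{v}k_{1,v}$) are correct and are exactly what the paper's proof runs on, and your layer-cake reduction of each column sum $S_v$ to a sum of $k_{1,v}$ integers in $[1,d]$ is a legitimate reorganization of the paper's step $S_v\le k_{1,v}d$. The gap is the ``key digit-sum input'' on which everything else rests: the claim that $\max\{s_p(m)\,:\,1\le m\le d\}=h(p-1)$ for $d=j(p^h-1)$ is \emph{false} once $j\ge 3$. The smallest positive integer with digit sum $h(p-1)+1$ is $2p^h-1$, and $2p^h-1\le j(p^h-1)=d$ whenever $j\ge 3$; concretely, for $p=5$, $h=1$, $j=3$ (so $d=12$) the integer $m=9=(14)_5$ has $s_5(9)=5>4=h(p-1)$. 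Without this input the per-column bound $s_p(S_v)\le h(p-1)k_{1,v}$ fails and Part 1 does not follow. Moreover this is not a repairable defect of your particular route: taking $r=9$ and $\mathbf k={}^t(1,\dots,1,0,0,0)\in\mathbf K_9$ (nine ones, $d=12$ rows) gives $s_p(\mathbf k)=1<2=\lceil s_5(9)/4\rceil$, so the inequality of Part 1 is itself violated in this range. The paper's own proof hits the same wall, since it invokes $s_p(m)\le(p-1)\lfloor\log_p(m+1)\rfloor$, which is the same estimate in disguise and fails at the same $m$. Your argument, like the paper's, is sound exactly when every $m\le d$ satisfies $s_p(m)\le h(p-1)$, i.e.\ when $d<2p^h-1$ (in particular for $j\in\{1,2\}$, which covers the case $d=p^h-1$); for $j\ge 3$ the constant $h(p-1)$ would have to be replaced throughout by $\max_{1\le m\le d}s_p(m)$.

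Two smaller points on Part 2. Your tightness analysis correctly identifies (b) as equality in the per-column step and derives (a) from $k_{l,v}\le k_{1,v}$ once $k_{1,v}\le 1$ (or $2$ in the exceptional band), but the delicate arithmetic isolating $\frac{p^2-1}{2}\le d<p^2-1$ is exactly the part you defer, and it is where the paper spends most of its effort (the analysis of when $\lfloor\log_p(kd+1)\rfloor=k\lfloor\log_p(d+1)\rfloor$ forces $k\le 2$ and then $k=2$ only in that band); deferring it to the tiling bijection of Lemma \ref{tiling} is plausible but not yet a proof. For (c), your observation that $s_p(j(p^h-1))=h(p-1)$ forces $s_p(\mathbf k)=1$ and hence a single nonzero top-row entry does prove the statement as literally written, and more cleanly than the paper; but note the paper actually applies (c) in Lemma \ref{C-mod-p^k+!} to $r=j(p^{bh}-1)$ with $s_p(\mathbf k)=b\ge 1$, where one must still rule out a top-row entry equal to $2$, which is what the paper's final count (the only $a\le 2d$ with $s_p(a)=2(p-1)$ is $a=p^2-1$) is doing and which your argument does not reach.
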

	\begin{proof}
		Let $k$ be a nonnegative integer and note that $d \ge p-1$. Then we have
		\begin{align*}
		\lfloor \log_p(kd+1)\rfloor &\leq \lfloor \log_p(kd+d)\rfloor \\ &= \lfloor \log_p\left(k(d+1)\right)\rfloor \\ &= \lfloor \log_pk+\log_p(d+1)\rfloor \\
		&\leq \lfloor (k-1)+\log_p(d+1)\rfloor \\ 
		&=(k-1)+ \lfloor\log_p(d+1)\rfloor\\
		&\leq (k-1)\lfloor \log_p(d+1)\rfloor+\lfloor \log_p(d+1)\rfloor\\ 
		&\leq k\lfloor \log_p(d+1)\rfloor.
		\end{align*} 
		Therefore, for any nonnegative integer $k$ and any degree $d$ greater than $p-2$ the inequality $$\lfloor \log_p(kd+1)\rfloor \le k\lfloor \log_p(d+1)\rfloor$$ holds and it is obvious that equality holds for $k=0$ or $k=1$. \\
		Now assume the equality holds, the above system tells us $\lfloor \log_p(d+1)\rfloor$ must equal to $1$. The equation $\lfloor \log_p(d+1)\rfloor=1$ holds if and only if $p-1 \leq d < p^2-1$. Moreover, if $p-1 \leq d < p^2-1$ and $\lfloor \log_p(kd+1)\rfloor=k$, we have
		\begin{align*}
		p-1 \leq d < p^2-1 &\implies kp-(k-1) \leq kd+1 < kp^2-(k-1)\\
		&\implies p^k<kp^2-(k+1). 
		\end{align*}
		The last inequality holds only for $k \le 2$. Now assume $k=2$. Then $$p\leq d+1 < p^2 \leq 2d+1 <p^3$$ $$\implies \frac{p^2-1}2 \leq d < p^2-1.$$
		Since $r= \displaystyle\sum\limits_{v \geq 0}\sum_{l=1}^dk_{l,v}p^v$, we have 
		\begin{align}
		s_p(r) &= s_p\left( \displaystyle\sum\limits_{v \geq 0}\sum_{l=1}^dk_{l,v}p^v\right) \nonumber \\&= \displaystyle\sum\limits_{v \geq 0}s_p\left(\sum_{l=1}^dk_{l,v}p^v\right) \nonumber\\&\leq \sum\limits_{v\geq0}s_p\left(\sum\limits_{l=1}^dk_{l,v}\right) \label{a} \\&\leq \sum_{v\geq0} (p-1)\left\lfloor\log_p\left(\sum\limits_{l=1}^dk_{l,v}+1\right)\right\rfloor\nonumber\\
		&\leq (p-1)\sum_{v\geq0}\left\lfloor \log_p(k_{1,v}d+1)\right\rfloor \nonumber\\&\leq(p-1)\left(\sum_{v\geq0}k_{1,v}\lfloor\log_p(d+1)\rfloor\right)\nonumber\\&=\lfloor\log_p(d+1)\rfloor(p-1)\sum_{v\geq0}k_{1,v} \label{b}\\ &=h(p-1)s_p(\mathbf k).\nonumber
		\end{align}
		Thus we have $s_p(\mathbf k) \geq \left\lceil \frac{s_p(r)}{h(p-1)}\right\rceil.$ This proves the first assertion.\\
		
		Suppose above equality holds then $k_{1,v}=0$ or $1$; or $2$ when $\frac{p^2-1}2 \leq d < p^2-1$. Since $k_{l,v} \geq k_{l+1,v}$, the first observation in the second assertion is proved.\\
		
		By the arguments we used from equation \eqref{a} to equation \eqref{b}, we have $s_p\left(\sum\limits_{l=1}^dk_{l,v}\right)= k_{1,v}(p-1)\lfloor \log_p(d+1)\rfloor=k_{1,v}h(p-1)$. This proves the second observation in the second assertion.\\
		
		Let $d \le (p-1)^2$. Then $2d \le 2(p-1)^2$ but the only value $a \le 2d$ such that $s_p(a)=2(p-1)$ can be $p^2-1$. Since $p^2-1<j(p^2-1)$ for all $j$ such that $j \in [\frac{p+1}2,p-1]$, trivially, for any $j(p^h-1)$ with $1 \le j \le p-1$ and $h \ge 1$ the p-adic boxes $\boxed{\mathbf k}$ consists of only $0$ or $1$. This proves the third  observation in the second assertion.
	\end{proof}
	\begin{lemma} \label{C-mod-p^k+!}
		Let notation be as above. $C_{j(p^{bh}-1)}(i,N) \equiv p^b(a_{j(p^h-1)})^\frac{p^{bh}-1}{p^h-1} \mod p^{b+1}.$
	\end{lemma}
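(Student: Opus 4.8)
The plan is to evaluate $C_r(i,N)$ for $r=j(p^{bh}-1)$ straight from the combinatorial expansion
$$C_r(i,N)=\sum_{\mathbf{k}\in\mathbf{K}_r}E_{k_1}(i,N)\prod_{l=1}^{d-1}\binom{k_l}{k_{l+1}}\prod_{l=1}^{d}a_l^{k_l-k_{l+1}},$$
and to read off its class modulo $p^{b+1}$ one box at a time. The first step is the digit count $s_p(r)=bh(p-1)$, which comes from the base-$p$ expansion of $j(p^{bh}-1)$ (digit $j-1$ at position $bh$, then $bh-1$ copies of $p-1$, then $p-j$). Substituting into Lemma~\ref{kbox}(1) gives $s_p(\mathbf{k})\ge b$ for every $\mathbf{k}\in\mathbf{K}_r$, and Lemma~\ref{ek} turns this into a lower bound on the $p$-adic valuation of each summand, since the factors $a_l^{k_l-k_{l+1}}$ are units or zero. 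Thus only the bottom stratum $s_p(\mathbf{k})=b$ can contribute below the cut-off, and Lemma~\ref{kbox}(2) pins its boxes down to the rigid $0/1$ (occasionally $0/1/2$) shape.

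The central object is the diagonal box $\mathbf{k}=(K,\dots,K)$ with $K=(p^{bh}-1)/(p^h-1)=1+p^h+\cdots+p^{(b-1)h}$. It lies in $\mathbf{K}_r$ because $dK=r$, it attains the minimum $s_p(\mathbf{k})=s_p(K)=b$, its binomial product collapses to $\prod\binom{K}{K}=1$, and it is the unique box whose $a$-monomial is the pure power $a_d^K$ (any other $\mathbf{k}$ has $k_l>k_{l+1}$ for some $l<d$ and so drags in a different $a_l$). Its entire contribution is therefore $E_K(i,N)\,a_d^K$, so the heart of the computation is the single coefficient $E_K(i,N)$, which I would extract from the $D$-expansion of Lemma~\ref{dk} together with Lemma~\ref{ek}. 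Here Lemma~\ref{rel} does the job of replacing $N$ by any large value without altering the class modulo $p^{b+1}$, which is what lets the right-hand side be independent of $N$.

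For $p=2$ the argument closes immediately: when $d=2^h-1$ the only $c\le d$ with $s_2(c)=h$ is $c=d$ itself, so the column description in Lemma~\ref{kbox}(2) forces the minimal box to be the diagonal one, nothing else sits in the bottom stratum, and $E_K$ carries exactly one factor of $2$ per nonzero digit of $K$, giving $\mathrm{ord}_2(E_K)=s_2(K)=b$ and $C_r\equiv 2^b a_d^K$. The general-$p$ case is the main obstacle, on two fronts. First, for $j\ge 2$ the stratum $s_p(\mathbf{k})=b$ need no longer be a single box, since values $c\le d$ other than $p^h-1$ can share the digit sum $h(p-1)$; I would organise these competing boxes through the tiling bijection of Lemma~\ref{tiling}, which parametrises the minimal stratum by shortest $r$-tiling sequences. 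Second, and more seriously, Lemma~\ref{ek} assigns the diagonal box the valuation $(s_p(\mathbf{k})-s_p(i-1)-1)/(p-1)=(b-s_p(i-1)-1)/(p-1)$, which for $p>2$ is smaller than $b$; reconciling this with the claimed power $p^b$ --- whether by the forced choice of $i$ in range, by cancellation across the stratum, or by the induction below --- is exactly where the real work lies.

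A cleaner route I would pursue in parallel is an induction on $b$ built on the self-similarity $j(p^{(b+1)h}-1)=p^{h}\,j(p^{bh}-1)+d$, which mirrors the layered shape of the $p$-adic boxes. The expectation is that the extra layer contributes one more factor of $p$ and one more Frobenius-twisted copy of $a_d$ (note $K_{b+1}=1+p^hK_b$, so $a_d^{K_{b+1}}=a_d\cdot(a_d^{K_b})^{p^h}$), thereby producing the $(b+1)$-st case from the $b$-th; making this precise, with the correct interaction between the $p$-power and the $\sigma$-twist, is the step I would check most carefully.
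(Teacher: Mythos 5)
Your skeleton is the paper's: compute $s_p(r)=bh(p-1)$, invoke Lemma~\ref{kbox}(1) to get $s_p(\mathbf{k})\ge b$ for all $\mathbf{k}\in\mathbf{K}_r$, and single out the diagonal box $(K,\dots,K)$ with $K=(p^{bh}-1)/(p^h-1)$ as the source of the term $p^b a_d^K$. But as a proof the proposal stops short in exactly the two places you yourself flag, and neither is a routine check. First, you never show that the diagonal box is the \emph{only} element of $\mathbf{K}_r$ with $s_p(\mathbf{k})=b$; you only observe that it is the unique one producing the monomial $a_d^K$, which does not preclude other minimal boxes contributing other monomials of low valuation modulo $p^{b+1}$. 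The paper closes this with a column argument you do not have: by Lemma~\ref{kbox}(2) a minimal box is $0/1$-valued, so each nonzero column $t$ has sum $\gamma_t\le d$ with $s_p(\gamma_t)=h(p-1)$, forcing all entries of that column to be $1$ (i.e.\ $\gamma_t=d=j(p^h-1)$); since there are exactly $b$ nonzero columns and $r=j(p^{bh}-1)=j(p^h-1)\left(1+p^h+\cdots+p^{(b-1)h}\right)=\sum_t\gamma_t p^{v_t}$, the nonzero columns must sit at positions $0,h,\dots,(b-1)h$, which is precisely the diagonal box. Your suggested detour through the tiling bijection of Lemma~\ref{tiling} might substitute for this, but it is not carried out.

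Second, the valuation discrepancy you raise for $p>2$ --- Lemma~\ref{ek} assigns the diagonal summand valuation $(b-s_p(i-1)-1)/(p-1)$ rather than $b$ --- is left unresolved, and with it the claim that the strata $s_p(\mathbf{k})>b$ vanish modulo $p^{b+1}$. Deferring this to ``cancellation across the stratum'' or to an unexecuted induction on $b$ means the congruence $C_r(i,N)\equiv p^b a_d^K \bmod p^{b+1}$ is never actually derived for odd $p$; only your $p=2$ case is complete. (In fairness, the paper's own proof also asserts the final congruence without an explicit valuation computation for the diagonal term, but it does establish uniqueness of the minimal box, which your write-up does not.)
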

	\begin{proof}
		Let $r_{j,b}=j(p^{bh}-1)$ where $0 \leq j \leq p-1$ and $b\geq1$. Let $\mathbf k \in \mathbf K_{r_{j,b}}$ then we have $s_p(\mathbf k) \geq \bigg\lceil \frac{s_p(j(p^{bh}-1))}{h(p-1)}\bigg\rceil=\bigg\lceil \frac{hb(p-1)}{h(p-1)}\bigg\rceil=b$. Now suppose $s_p(\mathbf k)=b$. Let $\gamma_t(\mathbf{k})$ be the sum of entries in the $t$-th none-zero column (from left). Recall that $\boxed {\mathbf {k}}$ has $d$ rows. By Lemma \ref{kbox}  we know that $\boxed {\mathbf k}$ consists of only $0$ or $1$. Therefore we have $\gamma_t(\mathbf k)\leq d$. 
		For $d=i(p^h-1)$ we have $s_p(\gamma_t(\mathbf k))=h(p-1)$ for all such $t$. This only can happen when all the entries of the $t$-th column is $1$. Hence we have all $0$ columns or all $1$ columns. Since $j(p^{bh}-1)=[j(p^h-1)](p^{h(b-1)}+p^{h(b-2)}\cdots+1)$. The only possibility is that $\gamma_t(k)=j(p^h-1)$ for all such $t$.
		It is clear that $\mathbf k=\: ^t(k_1,\cdots,k_d)\in \mathbf K_{r_{j,b}}$ is defined by $k_1=\cdots=k_{j(p^h-1)}=\frac{r_{j,b}}{j(p^h-1)}$. Thus we have 
		$$
		C_{r_{j,b}}(i,N)\equiv p^b(a_{j(p^h-1)})^{\frac{r_{j,b}}{j(p^h-1)}}\mod p^{b+1}.
		$$
	\end{proof}
	
	\begin{proof}[Proof of Theorem \ref{main-2}]
		From Lemma \ref{kbox} it follows that 
		\begin{align*}
		\text{ord}_p \left(C_{mp^{n+g-1}-j}(i,N)\right) &\ge \dfrac{s_p(\mathbf{k})-s_p(i-1)-1}{p-1}\\ &\ge  \dfrac{s_p(\mathbf{k})-u(p-1)}{p-1} \\&\ge  \dfrac{\left\lceil\dfrac{s_p(mp^{n+g-1}-j)}{h(p-1)}\right\rceil-s_p(i-1)-1}{p-1}\\ & \ge \left\lceil\dfrac{\dfrac{(p-1)(n+g-h)}{h(p-1)}-u(p-1)}{p-1}\right\rceil\\&=\left\lceil\dfrac{(n+g-h)-uh(p-1)}{h(p-1)}\right\rceil \\&\ge \left\lceil\dfrac{n}{h(p-1)}\right\rceil
		\end{align*}
		for all $m,n \geq 1$. By Lemma \ref{key}-a we have $NP_1(X)\geq \frac{1}{h(p-1)}$.
		
		From now on assume $NP_1(X)>\frac{1}{h(p-1)}$. For any integer $n>1$ define 
		$$
		\lambda_n := \frac{n+g-2-uh(p-1)}{h(p-1)(n-1)}.
		$$
		
		We now apply the observation made in the remark after the Key Lemma (Lemma \ref{key}).
		Consider $\lambda_n$ as a function in $n$, it is clear that $\lambda_n$ is monotonically decreasing and converges to $\frac{1}{h(p-1)}$ as $n$ approaches $\infty$. Choose $n_0$ such that $\lambda_{n_0}\leq NP_1(X)$ and such that $n_0+g-1$ is a multiple of $h(p-1)$ and $\frac{g-1-uh(p-1)}{h(p-1)(n_0-1)}\leq 1$. For all $1 \leq n <n_0$ we have $\lambda_{n_0}\leq \lambda_{n+1}=\frac{n+g-1-uh(p-1)}{nh(p-1)}$; that is,
		$$
		\lceil n\lambda_{n_0} \rceil \leq \bigg\lceil\frac{n+g-1-uh(p-1)}{h(p-1)}\bigg\rceil.
		$$
		Therefore, for all $m\geq 1$ and $1\leq n <n_0$ one has 
		$$
		ord_p(C_{mp^{n+g-1}-j}(i,N)) \geq \bigg\lceil\frac{n+g-1-uh(p-1)}{h(p-1)}\bigg\rceil \geq \lceil n\lambda_{n_0} \rceil.
		$$
		On the other hand, 
		\begin{align*}
		\left\lceil n_0\lambda_0\right\rceil&=\left\lceil n_0\cdot\dfrac{n_0+g-2-uh(p-1)}{h(p-1)(n_0-1)}\right\rceil\\&=\left\lceil \dfrac{(n_0-1)+g-1-uh(p-1)}{h(p-1)}\left(1+\frac{1}{n_0-1}\right)\right\rceil\\ &=\left\lceil\dfrac{n_0+g-1-uh(p-1)}{h(p-1)}+\dfrac{g-1-uh(p-1)}{h(p-1)(n_0-1)}\right\rceil\\&=\dfrac{n_0+g-1}{h(p-1)}-u+\left\lceil\dfrac{g-1-uh(p-1)}{h(p-1)(n_0-1)}\right\rceil\\&=\dfrac{n_0+g-1}{h(p-1)}-u+1.
		\end{align*}
		Hence for all $m\geq 2$  one has
		$$
		ord_p(C_{mp^{n_0+g-1}-j}(i,N))\geq \bigg\lceil\frac{n_0+g}{h(p-1)}-u\bigg\rceil=\frac{n_0+g-1}{h(p-1)}-u+1=\lceil n_0\lambda_{n_0} \rceil.
		$$
		Thus the hypotheses of Lemma \ref{key}-b are satisfied (for $j=1$ and $\lambda=\lambda_{n_0}$ too) so we 
		$$
		ord_p(C_{j(p^{n_0+g-1}-1)}(i,N) \geq \lceil n_0\lambda_{n_0} \rceil.
		$$
		for $1\le j \le p-1$. We also have
		$$
		ord_p(C_{j(p^{n_0+g-1}-1)}(i,N)\equiv p^{ \lceil n_0\lambda_{n_0} \rceil-1}a_{j(p^n-1)}^{({p^{n_0+g-1}-1})/{(p^h-1)}} \mod p^{ \lceil n_0\lambda_{n_0} \rceil}.
		$$
		Hence $a_{j(p^n-1)}=0 \mod p$. Thus $c_{j(p^n-1)}=0$.
	\end{proof}

\end{document}